\newcommand{\Rn}{\mathbb{R}^n}
\newcommand{\bx}{x}
\newcommand{\by}{y}
\newcommand{\bz}{z}
\newcommand{\bu}{{\bf u}}
\newcommand{\bb}{{\bf b}}
\newcommand{\Div}{\nabla\cdot}
\newcommand{\LLb}{\mathcal{L}_b}
\newcommand{\LLs}{\mathcal{L}_s}
\newcommand{\LLdel}{\mathcal{L}^{\delta,\beta}} %linear operator on vector fields
\newcommand{\Ldel}{L^{\delta,\beta}}
\newcommand{\cdel}{c^{\delta,\beta}}
\newcommand{\cdelb}{c^{\delta,\beta+2}}
\newcommand{\mdel}{m^{\delta,\beta}}
\newcommand{\mdelb}{m^{\delta,\beta+2}}
\newcommand{\Mdel}{M^{\delta,\beta}}
\DeclareMathOperator{\trace}{trace}
\newcommand{\Ldelinf}{L^{\delta,-\infty}}
\newcommand{\Nav}{{\mathcal{N}}} %Navier operator
\newcommand{\T}{{\bf T}}
\newcommand{\limd}{\lim_{\delta\rightarrow 0^+}}
\newcommand{\Bd}{\partial_\delta}
\newcommand{\lam}{\lambda^{*}}
\newtheorem{theorem}{Theorem}
\newtheorem{lemma}{Lemma\textbf{}}
\newtheorem{cor}{Corollary}
\newtheorem{prop}{Proposition}
\theoremstyle{remark}
\newtheorem{remark}{Remark}
\title{Linear peridynamics Fourier multipliers and eigenvalues}
\author{Bacim Alali and Nathan Albin\\
\\
{\footnotesize Department of Mathematics, Kansas State University, Manhattan, KS}}
\begin{document}

\maketitle

\begin{abstract}
A characterization for the Fourier multipliers and eigenvalues of linear peridynamic operators is provided. The analysis is presented for  state-based peridynamic operators for isotropic homogeneous media in any spatial dimension. We provide explicit formulas for the eigenvalues in terms of the space dimension, the nonlocal parameters, and the material properties.
%We provide integral representations as well as hypergeometric representations for the eigenvalues. 

The approach we follow is based on the Fourier multiplier analysis developed in \cite{alali2019fourier}. The Fourier multipliers of linear peridynamic operators are second-order tensor fields, which are given through integral representations.
It is shown that the eigenvalues of the peridynamic operators can be derived directly from the eigenvalues of the Fourier multiplier tensors.
%Central to the analysis is to recognize that  the eigenvalues of the Fourier multipliers are the same as the eigenvalues of the peridynamic operators. 
We reveal a simple structure for the Fourier multipliers in terms of hypergeometric functions, which allows for providing integral representations as well as hypergeometric representations of the eigenvalues. These representations are utilized to show  the convergence of the eigenvalues of linear peridynamics to the eigenvalues of the Navier operator of linear elasticity in the limit of vanishing nonlocality. Moreover, the hypergeometric representation of the eigenvalues is utilized to compute the spectrum of linear peridynamic operators.
 
\end{abstract}

{\it Keywords}:
Fourier multipliers, tensor multipliers, eigenvalues, peridynamics.

\section{Introduction}
In this work, we study the Fourier multipliers of  linear state-based peridynamic operators. The main goals are to find explicit representations for the multipliers, when the operator is defined on $\Rn$,  and to find explicit representations for the eigenvalues of the peridynamic operator, when it is defined on periodic domains. The  formulas that we derive for the Fourier multipliers and the eigenvalues
are of two types: nonlocal (integral) representations and representations in terms of hypergeometric functions. As have been demonstrated  in \cite{alali2019fourier} and \cite{alali2020fourier}, such explicit representations can be exploited to rigorously characterize the behavior of nonlocal operators and develop regularity theory for nonlocal equations, as well as to devise efficient and accurate spectral methods for the numerical solutions of nonlocal equations. The current work  focuses on 
the derivations of these representations, while the regularity of peridynamic equations and spectral methods for peridynamics based on the approach presented here will be pursued in forthcoming  works.

There has been a recent increased interest in spectral methods for peridynamics and nonlocal equations as these methods provide efficient and accurate solvers.
%, and therefore this area has been gaining increasing interest. 
One of the features of these spectral solvers  is that the nonlocality parameters %decouple
do not scale with the grid size, thus providing computational accuracy and efficiency \cite{alali2020fourier}. Spectral methods have been developed for nonlocal and peridynamic equations in periodic  domain,   bounded domains, and for problems on surfaces, as well as problems involving fracture \cite{du2017fast}, \cite{slevinsky2018spectral}, \cite{jafarzadeh2021fast}, and \cite{jafarzadeh2022general}.
Spectral and Fourier multipliers approaches  provide analysis techniques for studying the regularity of solutions of nonlocal equations, see for example \cite{du2016asymptotically} and \cite{alali2019fourier}. The work in \cite{scott2022fractional} follows a Fourier multipliers approach to study a fractional  Lam\'e-Navier operator, its connection to state-based peridynamics, and to establish analysis results for this operator and certain associated fractional equations, see also \cite{scott2020mathematical}.    

The approach presented in this work to uncover explicit formulas for the multipliers and the eigenvalues is based on two indirect connections; the first is a connection between the multipliers of the peridynamic operator, which are second-order tensor fields, and the scalar multipliers of the nonlocal Laplace operator. The second connection is between the multipliers of the peridynamic operator, defined on $\Rn$, and the eigenvalues of the peridynamic operator, when it is defined on periodic domains. 
Throughout this article we refer to the Fourier multipliers of the nonlocal Laplacian as the {\it scalar multipliers}, whereas the {\it tensor multipliers} refer to the Fourier multipliers of the peridynamic operator.  

A brief description of the main steps in our approach and the organization of the article are as follows. The definition of the linear peridynamic operator in $\Rn$ and the specific integral kernels are provided in Section~\ref{sec:overview}. In order to find explicit representations in terms of the nonlocality parameters and the space dimension, we focus on integral kernels of the form \eqref{eq:kernel}, which can be singular or integrable.
 However, we emphasize that the results in this work can be generalized to other types of integral kernels. Section~\ref{sec:scalar-multipliers} presents the nonlocal Laplacian and its  multipliers given by  the integral and hypergeometric representations \eqref{eq:multiplier-cosine} and \eqref{eq:multiplier-general}, respectively. The multipliers of the Navier operator of linear elasticity and the integral formula for the tensor multipliers of the peridynamic operator are derived in Section~\ref{sec:integral-tensor-multipliers}. 
 Each entry of the $n\times n$ tensor multiplier is written as an integral in $\Rn$. A key step in our approach is to reveal a simple structure for this tensor. This is accomplished in Section~\ref{sec:structure}, where we show in Section~\ref{sec:derivatives} that the tensor multipliers can be recovered using the derivatives of the scalar multipliers. By combining this relationship with the hypergeometric formula of the scalar multipliers together with the aid of some  facts about hypergeometric functions as presented in Section~\ref{sec:hypergeometric-formulas}, we arrive at a simple structure for the tensor multipliers in terms of hypergeometric functions as demonstrated in Section~\ref{sec:tensors-hypergeometric}.    An immediate consequence of this result is the convergence of the tensor multipliers of the peridynamic operator to the tensor multipliers of the Navier operator for two kinds of local limits. In Section~\ref{sec:eigenvectors}, the tensor multiplier at any vector in $\Rn$ is shown to be a real symmetric matrix  with $n$ orthonormal eigenvectors and two distinct associated eigenvalues. Using the hypergeometric representation for the tensor multipliers, we derive explicit formulas for these eigenvalues in terms of hypergeometric functions. Using these eigenvalue formulas, we derive  integral representations for the eigenvalues in Section~\ref{sec:eigenvalues-integral}.
In Section~\ref{sec:periodic}, we consider the peridynamic operator defined for periodic vector-fields. We show how  the eigenvector fields and the eigenvalues for the peridynamic operator on periodic domains can  be derived from the tensor multipliers' eigenvectors and  eigenvalues.

\section{Overview}
\label{sec:overview}
Linear peridynamic operators defined in a domain $\Omega\subseteq\Rn$ have the form  \cite{silling2010linearized}
\begin{equation*}
    \mathcal{L}\bu(x)=\int_{\Omega}C(x,y) (\bu(y)-\bu(x))\,dy,
\end{equation*}
where $C(x,y)$ is a second-order tensor and $\bu:\Rn\rightarrow\Rn$ is a vector field. For a homogeneous isotropic solid, the linear operator takes the form
\begin{eqnarray*}
    \mathcal{L} \bu(x)&=& \rho\int_{\Omega}
\frac{\gamma(\|y-x\|)}{\|y-x\|^2}(y-x)\otimes(y-x)\big( \bu(y)-\bu(x)\big)\,dy\\
  \nonumber\\
 % \nonumber
  && + \rho'\int_{\Omega}\int_{\Omega}
 \gamma(\|y-x\|)\gamma(\|z-x\|)(y-x)\otimes(z-x)\big( \bu(z)-\bu(x)\big)\,dz dy\\
  && + \rho'\int_{\Omega}\int_{\Omega}
 \gamma(\|y-x\|)\gamma(\|z-y\|)(y-x)\otimes(z-y)\big( \bu(z)-\bu(y)\big)\,dz dy, 
%  \label{eq:LLdel_homog}
\end{eqnarray*}
where $\gamma$ is a scalar field, and $\rho$ and $\rho'$ are scaling constants that include the material properties. Taking $\Omega=\Rn$, and due to symmetry, the operator reduces to
\begin{eqnarray*}
    \mathcal{L} \bu(x)&=& \rho\int_{\Rn}
\frac{\gamma(\|y-x\|)}{\|y-x\|^2}(y-x)\otimes(y-x)\big( \bu(y)-\bu(x)\big)\,dy\\
  \nonumber\\
 % \nonumber
   && + \rho'\int_{\Rn}\int_{\Rn}
 \gamma(\|y-x\|)\gamma(\|z-y\|)(y-x)\otimes(z-y) \bu(z)\,dz dy. 
 % \label{eq:LLdel_homog}
\end{eqnarray*}
 In this work, we focus on radially symmetric kernels with compact support of the form
 \begin{equation}\label{eq:kernel}
 \gamma(\|y-x\|)=\cdel  \frac{1}{\|y-x\|^\beta}\; \chi_{B_\delta(x)}(y),
 \end{equation}
where $\cdel$ is given by \eqref{eq:cdel-explicit}, $\chi_{B_\delta(x)}$ is the indicator function of the ball of radius $\delta>0$ centered at $x$, and  the exponent satisfies $\beta<n+2$.
In this case, the linear peridynamic operator, parametrized by the horizon (nonlocality parameter) $\delta$ and the integral kernel exponent $\beta$, can be written as
\begin{eqnarray}
 \nonumber
    \LLdel \bu(\bx)&=& (n+2)\,\mu\,\cdel\int_{B_\delta(\bx)}
   \frac{(\by-\bx)
\otimes(\by-\bx)}{\|\by-\bx\|^{\beta+2}}\big( \bu(\by)-\bu(\bx)\big)
  \,d\by\\
  \nonumber\\
  && + (\lam-\mu)\,\frac{(\cdel)^2}{4}\int_{B_\delta(\bx)}\int_{B_\delta(\by)}
 \frac{\by-\bx}{\|\by-\bx\|^\beta}
\otimes\frac{\bz-\by}{\|\bz-\by\|^\beta}\,\bu(\bz)
 \,d\bz d\by,
  \label{eq:LLdel_homog}
\end{eqnarray}
where  $\mu$ and $\lam$ are Lam\'{e} parameters, and
the scaling constant $\cdel$ is 
defined by
\begin{eqnarray}
\label{eq:cdel-integral}
\nonumber
\cdel &:=& \left(\frac{1}{2 n}\int_{B_\delta(0)}\frac{\|w\|^2}{\|w\|^\beta}\;dw\right)^{-1},\\
\label{eq:cdel-explicit}
&=& \frac{2(n+2-\beta)\Gamma\left(\frac{n}{2}+1\right)}
    {\pi^{n/2}\delta^{n+2-\beta}}.
\end{eqnarray}

\begin{remark}
The second Lam\'{e} parameter is usually denoted by $\lambda$, but we choose to use $\lam$ instead in order to keep $\lambda$ to denote an eigenvalue.
\end{remark}

It is convenient to use the  following decomposition of  $\LLdel$
\[
\LLdel=\LLb+\LLs,
\]
where, after changing variables, %$\LLdel_b$ is given by
\begin{equation}
 \LLb \bu(\bx)= (n+2)\,\mu\,\cdel\int_{B_\delta(0)}
   \frac{w\otimes w}{\|w\|^{\beta+2}}\big( \bu(x+w)-\bu(\bx)\big)\,dw,
  \label{eq:Lb}
\end{equation}
and% $\LLdel_s$ is defined by
\begin{equation}
 \LLs \bu(\bx)= (\lam-\mu)\,\frac{(\cdel)^2}{4}\int_{B_\delta(0)}\int_{B_\delta(0)}
 \frac{w}{\|w\|^\beta}
\otimes\frac{q}{\|q\|^\beta}\,\bu(x+q+w)\,dq dw.
 \label{eq:Ls}
\end{equation}
%For ease of notation, we  suppress the superscripts on the operators $\LLdel_b$ and $\LLdel_s$. 
We note that $\LLb$ is the linear operator for  bond-based peridynamics.

We denote by $\Nav$ the Navier operator  of linear elasticity. For a homogeneous isotropic medium, it is given by
 \begin{eqnarray}
\label{Nhomo}
%\nonmember
%\N\bu &=& \Div\sigma\\
\Nav\bu&=& (\lam+\mu)\nabla( \Div\bu) + \mu\Delta \bu.
\end{eqnarray}

\section{Fourier multipliers}\label{sec:multipliers}
\subsection{Multipliers for the nonlocal Laplacian}
\label{sec:scalar-multipliers}
For scalar fields $u:\Rn\rightarrow \mathbb{R}$, the analogue to the peridynamic operator $\LLdel$, is the nonlocal Laplacian,  which in this case is given by
\begin{equation}\label{eq:nonlocal_laplacian}
  \Ldel u(x) = \cdel\int_{B_\delta(x)}\frac{u(y)-u(x)}{\|y-x\|^\beta}\;dy,
  %= \cdel\int_{B_\delta(0)}\frac{u(x+z)-u(x)}{\|z\|^\beta}\;dz,
\end{equation}
with $\cdel$ given by \eqref{eq:cdel-explicit}.

The Fourier multipliers for the nonlocal Laplacian in \eqref{eq:nonlocal_laplacian} have been studied in \cite{alali2019fourier}, in which the multiplier $\mdel$ is defined through the Fourier transform by
\begin{equation}\label{eq:L-multiplier}
	\Ldel u(x) = \frac{1}{(2\pi)^n}\int_{\mathbb{R}^n}\mdel(\nu)\widehat{u}(\nu)e^{i\nu\cdot x}\;d\nu,
\end{equation}
where   $\mdel$ has the integral representation
\begin{equation}\label{eq:multiplier-cosine}
	\mdel(\nu) = \cdel\int_{B_\delta(0)}\frac{\cos(\nu\cdot w)-1}{\|w\|^\beta}\;dw.
\end{equation}
The  hypergeometric  representation  of  the multipliers is provided by %\cite{alali2019fourier}
\begin{equation}\label{eq:multiplier-general}
	\mdel(\nu) = -\|\nu\|^2\,_2F_3\left(1,\frac{n+2-\beta}{2};2,\frac{n+2}{2},\frac{n+4-\beta}{2};-\frac{1}{4}\|\nu\|^2\delta^2\right).
\end{equation}

\subsection{Integral representations for the peridynamic multipliers}
\label{sec:integral-tensor-multipliers}
In this section, we extend the approach developed in \cite{alali2019fourier} for the nonlocal Laplacian $\Ldel$ in \eqref{eq:nonlocal_laplacian} to the peridynamic operator $\LLdel$ in \eqref{eq:LLdel_homog}. We begin by deriving integral formulas for the Fourier multipliers of $\LLdel$. 
Express $\bu$ through its Fourier transform as
\begin{equation*}
	\bu(x) = \frac{1}{(2\pi)^n}\int_{\mathbb{R}^n}\widehat{\bu}(\nu)e^{i\nu\cdot x}\;d\nu.
\end{equation*}
Since the definition of $\LLdel$ can be extended to the space of tempered distributions through the multipliers derived below, it is sufficient  to assume that $\bu$ is a Schwartz vector field.  We compute the multipliers for $\LLb$ and $\LLs$ separately. Applying $\LLb$  shows that
\begin{equation*}
\begin{split}
	\LLb \bu(x) &=  (n+2)\,\mu\,\cdel\int_{B_\delta(0)}
   \frac{w\otimes w}{\|w\|^{\beta+2}}\big( \bu(x+w)-\bu(\bx)\big)\,dw,\\
    &= \frac{1}{(2\pi)^n}\int_{\mathbb{R}^n}\left[
    (n+2)\,\mu\,\cdel \int_{B_\delta(0)}\frac{w\otimes w}{\|w\|^{\beta+2}}\left(e^{i\nu\cdot w}-1\right)\;dw
    \right]\widehat{\bu}(\nu)e^{i\nu\cdot x}\;d\nu,
\end{split}
\end{equation*}
providing the representation
\begin{equation}%\label{eq:multiplier-orig}
	\LLb \bu(x) = \frac{1}{(2\pi)^n}\int_{\mathbb{R}^n}M_b(\nu)\widehat{\bu}(\nu)e^{i\nu\cdot x}\;d\nu,
\end{equation}
where
\begin{eqnarray}\label{eq:Mb}
\nonumber
	M_b(\nu) &=& (n+2)\,\mu\,\cdel \int_{B_\delta(0)}\frac{w\otimes w}{\|w\|^{\beta+2}}\left(e^{i\nu\cdot w}-1\right)\;dw\\
	&=&(n+2)\,\mu\,\cdel \int_{B_\delta(0)}\frac{w\otimes w}{\|w\|^{\beta+2}}\left(\cos(\nu\cdot w)-1\right)\;dw.
\end{eqnarray}
Similarly, we compute the multipliers of $\LLs$, %in \eqref{eq:Ls}.
\begin{equation*}
\begin{split}
	\LLs \bu(x) &=  (\lam-\mu)\,\frac{(\cdel)^2}{4}\int_{B_\delta(0)}\int_{B_\delta(0)}
 \frac{w}{\|w\|^\beta}
\otimes\frac{q}{\|q\|^\beta}\,\bu(x+q+w)\,dq dw,\\
    &= \frac{1}{(2\pi)^n}\int_{\mathbb{R}^n}\left[
   (\lam-\mu)\,\frac{(\cdel)^2}{4}\int_{B_\delta(0)}
 \frac{w}{\|w\|^\beta} e^{i \nu\cdot w}\,dw
\otimes\int_{B_\delta(0)}\frac{q}{\|q\|^\beta} e^{i \nu\cdot q}\,dq 
    \right]\widehat{\bu}(\nu)e^{i\nu\cdot x}\;d\nu,
\end{split}
\end{equation*}
providing the representation
\begin{equation}%\label{eq:multiplier-orig}
	\LLs \bu(x) = \frac{1}{(2\pi)^n}\int_{\mathbb{R}^n}M_s(\nu)\widehat{\bu}(\nu)e^{i\nu\cdot x}\;d\nu,
\end{equation}
where
\begin{eqnarray}\label{eq:Ms}
\nonumber
	M_s(\nu) &=& (\lam-\mu)\,\frac{(\cdel)^2}{4}\left(\int_{B_\delta(0)}
 \frac{w}{\|w\|^\beta} e^{i \nu\cdot w}\,dw\right)
\otimes\left(\int_{B_\delta(0)}\frac{w}{\|w\|^\beta} e^{i \nu\cdot w}\,dw\right)\\
	&=&-(\lam-\mu)\,\frac{(\cdel)^2}{4}\left(\int_{B_\delta(0)}
 \frac{w}{\|w\|^\beta} \sin(\nu\cdot w)\,dw \right)
\otimes\left(\int_{B_\delta(0)}\frac{w}{\|w\|^\beta} \sin(\nu\cdot w)\,dw\right).
\end{eqnarray}
Combining \eqref{eq:Mb} and \eqref{eq:Ms}, we obtain the multipliers for $\LLdel$,
\begin{eqnarray}\label{eq:M}
%\nonumber
\Mdel= M_b+M_s, 
\end{eqnarray}
which satisfy
\begin{eqnarray}
\nonumber
\widehat{\LLdel \bu}= \Mdel \,\widehat{\bu}. 
\end{eqnarray}
The  following  summarizes the results of this subsection.
\begin{prop}
The Fourier multipliers $\Mdel$ of the linear peridynamic operator $\LLdel$ in \eqref{eq:LLdel_homog} are characterized through  integral representations as given by  \eqref{eq:M}, \eqref{eq:Mb} and \eqref{eq:Ms}.
\end{prop}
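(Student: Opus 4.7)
The plan is to follow exactly the derivation already sketched in the paragraphs preceding the proposition and argue that each step is rigorously justified on Schwartz vector fields, with the multiplier formulas extracted by reading off the integrand against $\widehat{\bu}(\nu)e^{i\nu\cdot x}$. Since $\LLdel=\LLb+\LLs$ and the Fourier multiplier is linear in the operator, it suffices to compute $M_b$ and $M_s$ separately and then set $\Mdel=M_b+M_s$.

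First I would check well-posedness of the two integrals under the standing assumption $\beta<n+2$. For $\LLb$, writing $\bu(x+w)-\bu(x)=w\cdot\nabla\bu(x)+O(\|w\|^2)$ and noting that the odd part integrates to zero by evenness of $w\otimes w/\|w\|^{\beta+2}$, one sees the integrand is $O(\|w\|^{2-\beta})$ near the origin, which is integrable on $B_\delta(0)$ precisely when $\beta<n+2$. For $\LLs$, the kernel $w/\|w\|^\beta$ has the weaker restriction $\beta<n+1$ for absolute integrability, but after pairing with $\sin(\nu\cdot w)$ we get $O(\|w\|^{2-\beta})$ as well. For Schwartz $\bu$, $\widehat{\bu}$ is also Schwartz, so applying Fubini to interchange the $\nu$-integration (against $\widehat{\bu}$) with the $w$- or $(w,q)$-integration is straightforward.

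Next I would carry out the two computations. For $\LLb$, substituting $\bu(x+w)=(2\pi)^{-n}\int\widehat{\bu}(\nu)e^{i\nu\cdot(x+w)}d\nu$ and applying Fubini gives
\begin{equation*}
\LLb\bu(x)=\frac{1}{(2\pi)^n}\int_{\mathbb{R}^n}\left[(n+2)\mu\cdel\int_{B_\delta(0)}\frac{w\otimes w}{\|w\|^{\beta+2}}(e^{i\nu\cdot w}-1)\,dw\right]\widehat{\bu}(\nu)e^{i\nu\cdot x}d\nu,
\end{equation*}
which identifies the bracketed quantity as $M_b(\nu)$. Splitting $e^{i\nu\cdot w}=\cos(\nu\cdot w)+i\sin(\nu\cdot w)$ and observing that $w\otimes w/\|w\|^{\beta+2}$ is even in $w$ while $\sin(\nu\cdot w)$ is odd kills the imaginary part, yielding the cosine form in \eqref{eq:Mb}. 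For $\LLs$, the key step is that the double integral decouples once we insert the Fourier representation: the integrand acquires the factor $e^{i\nu\cdot(x+q+w)}=e^{i\nu\cdot x}e^{i\nu\cdot w}e^{i\nu\cdot q}$, so the $w$- and $q$-integrations separate, producing a tensor product of two identical vector-valued integrals. The same parity argument, now with $w/\|w\|^\beta$ odd, eliminates the cosine contributions and leaves each factor as $i\int_{B_\delta(0)}\frac{w}{\|w\|^\beta}\sin(\nu\cdot w)\,dw$. The product of the two $i$'s gives the overall minus sign in \eqref{eq:Ms}.

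Finally, summing the two pieces gives $\Mdel=M_b+M_s$ and hence $\widehat{\LLdel\bu}=\Mdel\widehat{\bu}$, proving the claim. There is no real obstacle: the only point that deserves care is the decoupling of the double integral in the $\LLs$ computation, which follows cleanly from the tensor-product structure of the exponential, together with a routine application of Fubini once the kernel bounds above have been established.
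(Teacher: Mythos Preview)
Your proposal is correct and follows essentially the same route as the paper: the derivation preceding the proposition is the paper's proof, and it proceeds exactly as you describe---insert the Fourier representation of $\bu$, swap integrals, read off the bracketed tensor as the multiplier, and use the even/odd parity of the kernels to reduce $e^{i\nu\cdot w}$ to the cosine form in $M_b$ and the sine form (with the $i^2=-1$ sign) in $M_s$. Your additional remarks on integrability near the origin and the Fubini justification are more explicit than what the paper records, but the underlying argument is the same.
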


We note that the Fourier multipliers of $\Nav$, the Navier operator given in \eqref{Nhomo}, are similarly defined by
\[
\widehat{\Nav \bu}= M^{\Nav} \,\widehat{\bu}, 
\]
and can be shown to be given explicitly by 
\begin{equation}
\label{eq:N-mutipliers}    
M^{\Nav}(\nu)=-(\lam+\mu) \nu\otimes\nu -\mu \|\nu\|^2 \; I,
\end{equation}
where $I$ is the identity matrix.
\subsection{Peridynamic multipliers: Structure and hypergeometric representations}
\label{sec:structure}
We  emphasize that the multipliers of linear peridynamics, given by \eqref{eq:Mb},\eqref{eq:Ms}, and \eqref{eq:M}, are second-order tensor fields. In this section, we reveal a simple and explicit structure for the matrix $\Mdel(\nu)$ in terms of $\nu$ and the derivatives of the scalar multipliers (multipliers of the nonlocal Laplacian) $\mdel(\nu)$ given by \eqref{eq:multiplier-cosine} or, equivalently, by \eqref{eq:multiplier-general}.
\subsubsection{Hypergeometric formulas}\label{sec:hypergeometric-formulas}
In this section, we derive and present hypergeometric formulas that will be useful in the subsequent sections.  Let $\mathbf{a}=(a_1,a_2,\ldots,a_p)$ and $\mathbf{b}=(b_1,b_2,\ldots,b_q)$ be two vectors of coefficients.  The generalized hypergeometric function ${}_pF_q$ with parameters $\mathbf{a}$ and $\mathbf{b}$ is defined as
\begin{equation*}
{}_pF_q(\mathbf{a};\mathbf{b};z) := \sum_{k=0}^\infty\frac{(\mathbf{a})_k}{(\mathbf{b})_k}\frac{z^k}{k!}.
\end{equation*}
Here, the notation $(\mathbf{a})_k$ represents the product
\begin{equation*}
(\mathbf{a})_k = (a_1)_k(a_2)_k\cdots(a_p)_k,
\end{equation*}
where $(a)_k$ is the Pochhammer symbol
\begin{equation*}
(a)_k = \frac{\Gamma(a+k)}{\Gamma(a)} = a(a+1)(a+2)\cdots(a+k-1).
\end{equation*}
We also define the notation
\begin{equation*}
\prod\mathbf{a} = a_1a_2\cdots a_p\qquad\text{and}\qquad\mathbf{a}+c = (a_1+c,a_2+c,\ldots,a_p+c),
\end{equation*}
and recall the following useful facts about the Pochhammer symbol.
\begin{equation}\label{eq:poch-relation}
(a)_{k+1} = a(a+1)\cdots(a+k-1)(a+k) = a(a+1)_k,
\end{equation}
and
\begin{equation}\label{eq:poch-ratio}
\frac{(a+1)_k}{(a)_k} = \frac{a+k}{a}.
\end{equation}

In light of~\eqref{eq:multiplier-general}, we consider the derivatives of a function of the form
\begin{equation}\label{eq:hypergeometric-form}
f(z) = z\cdot{}_pF_q(\mathbf{a};\mathbf{b};z)
= z\sum_{k=0}^\infty\frac{(\mathbf{a})_k}{(\mathbf{b})_k}\frac{z^k}{k!}
= \sum_{k=0}^\infty\frac{(\mathbf{a})_k}{(\mathbf{b})_k}\frac{z^{k+1}}{k!}.
\end{equation}

\begin{lemma}\label{lem:hypergeometric-derivatives}
Let $f(x)$ have the form~\eqref{eq:hypergeometric-form}.  Then
\begin{equation}\label{eq:f-prime}
f'(z) = {}_{p+1}F_{q+1}(\mathbf{a}';\mathbf{b}';z)
\end{equation}
and
\begin{equation}\label{eq:f-prime2}
f''(z) = \frac{\prod\mathbf{a'}}{\prod\mathbf{b'}}\;{}_{p+1}F_{q+1}(\mathbf{a}'+1;\mathbf{b}'+1;z),
\end{equation}
where
\begin{equation*}
\mathbf{a'}= (2,a_1,\ldots,a_p)\qquad\text{and}\qquad
\mathbf{b'}= (1,b_1,\ldots,b_q).
\end{equation*}
\end{lemma}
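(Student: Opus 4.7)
The plan is to differentiate the defining power series for $f$ term by term and recognize the resulting series as generalized hypergeometric functions by absorbing the extra $k$-dependent factors into new Pochhammer symbols. Since power series can be differentiated inside their radius of convergence (which is either $\infty$ or a disk, depending on $p,q$, but in any case nonempty), termwise differentiation is justified without further comment.

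First I would differentiate the representation $f(z) = \sum_{k=0}^\infty\frac{(\mathbf{a})_k}{(\mathbf{b})_k}\frac{z^{k+1}}{k!}$ to obtain $f'(z)=\sum_{k=0}^\infty\frac{(\mathbf{a})_k}{(\mathbf{b})_k}\frac{(k+1)z^k}{k!}$. The key observation is that the extra factor $k+1$ has a clean Pochhammer realization: using $(2)_k = (k+1)!$ and $(1)_k = k!$, one gets $k+1 = (2)_k/(1)_k$. Inserting this shows
\begin{equation*}
f'(z) = \sum_{k=0}^\infty\frac{(2)_k(\mathbf{a})_k}{(1)_k(\mathbf{b})_k}\frac{z^k}{k!} = {}_{p+1}F_{q+1}(\mathbf{a}';\mathbf{b}';z),
\end{equation*}
with $\mathbf{a}'$ and $\mathbf{b}'$ as stated, proving \eqref{eq:f-prime}.

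For \eqref{eq:f-prime2}, I would differentiate the series for $f'$ term by term to get $f''(z)=\sum_{k=1}^\infty \frac{(\mathbf{a}')_k}{(\mathbf{b}')_k}\frac{z^{k-1}}{(k-1)!}$ and then reindex with $j=k-1$. This leaves ratios of the form $(a_i')_{j+1}/(b_i')_{j+1}$, which is exactly what identity \eqref{eq:poch-relation} is designed to handle: pulling out the leading factor $a_i'$ in each numerator (resp.\ $b_i'$ in each denominator) converts $(a_i')_{j+1}$ to $a_i'\,(a_i'+1)_j$. Multiplying these factorizations across all components of $\mathbf{a}'$ and $\mathbf{b}'$ produces the constant $\prod\mathbf{a}'/\prod\mathbf{b}'$ and leaves the Pochhammer vectors $\mathbf{a}'+1,\mathbf{b}'+1$, giving the claimed formula.

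There is no real obstacle here; the only thing to be slightly careful about is the bookkeeping when the $k=0$ term drops out in $f''$ (since its $z^{k-1}$ contribution vanishes after differentiation, being multiplied by $k$), which is why the reindexing starts at $j=0$ cleanly. Identity \eqref{eq:poch-ratio} is not strictly needed for this particular proof, but \eqref{eq:poch-relation} does the essential work.
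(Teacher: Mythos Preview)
Your proof is correct and follows essentially the same route as the paper: termwise differentiation, then rewriting the extra $k$-dependent factor as a ratio of Pochhammer symbols (the paper invokes~\eqref{eq:poch-ratio} for $k+1=(2)_k/(1)_k$, you compute it directly from $(2)_k=(k+1)!$, $(1)_k=k!$), and for $f''$ a reindexing followed by~\eqref{eq:poch-relation} to pull out the constant $\prod\mathbf{a}'/\prod\mathbf{b}'$. The only cosmetic difference is that the paper writes the differentiated term as $\frac{(\mathbf{a}')_k k}{(\mathbf{b}')_k}\frac{z^{k-1}}{k!}$ before reindexing, whereas you absorb $k/k!$ into $1/(k-1)!$ immediately.
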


\begin{proof}
Taking the term-wise first derivative and applying~\eqref{eq:poch-ratio} shows that
\begin{equation*}
f'(z) = \sum_{k=0}^\infty\frac{(\mathbf{a})_k(k+1)}{(\mathbf{b})_k}\frac{z^{k}}{k!}
= \sum_{k=0}^\infty\frac{(\mathbf{a})_k(2)_k}{(\mathbf{b})_k(1)_k}\frac{z^{k}}{k!}
= {}_{p+1}F_{q+1}(\mathbf{a}';\mathbf{b}';z),
\end{equation*}

Taking a term-wise derivative once again, then reindexing and using~\eqref{eq:poch-relation} yields
\begin{equation*}
\begin{split}
f''(z) &= \sum_{k=1}^\infty\frac{(\mathbf{a'})_kk}{(\mathbf{b'})_k}\frac{z^{k-1}}{k!}
= \sum_{k=0}^\infty\frac{(\mathbf{a'})_{k+1}(k+1)}{(\mathbf{b'})_{k+1}}\frac{z^{k}}{(k+1)!}\\
&= \frac{\prod\mathbf{a'}}{\prod\mathbf{b'}}\sum_{k=0}^\infty\frac{(\mathbf{a'}+1)_{k}}{(\mathbf{b'}+1)}\frac{z^{k}}{k!}
= \frac{\prod\mathbf{a'}}{\prod\mathbf{b'}}\;{}_{p+1}F_{q+1}(\mathbf{a}'+1;\mathbf{b}'+1;z).
\end{split}
\end{equation*}
\end{proof}

Two additional formulas we shall use are found in the following lemmas.

\begin{lemma}\label{lem:pFq-minus-1}
For any choice of coefficients,
\begin{equation}\label{eq:pFq-minus-1}
{}_pF_q(\mathbf{a};\mathbf{b};z) - 1 =
\frac{\prod\mathbf{a}}{\prod\mathbf{b}}z\cdot{}_{p+1}F_{q+1}(1,\mathbf{a}+1;2,\mathbf{b}+1);z).
\end{equation}
\end{lemma}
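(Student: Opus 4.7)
The plan is to attack this by manipulating the defining series directly, much as in the proof of Lemma~\ref{lem:hypergeometric-derivatives}. The right-hand side of~\eqref{eq:pFq-minus-1} contains a ``$1$'' in the upper list and a ``$2$'' in the lower list of the hypergeometric, which is the telltale pattern produced by a shift of summation index by one combined with the identity $(1)_k = k!$ and $(2)_k = (k+1)!$. So I expect the entire argument to be a short series computation with no real obstacles.

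First, I would write out the left-hand side using the series definition and strip off the $k=0$ term, which equals $1$:
\begin{equation*}
{}_pF_q(\mathbf{a};\mathbf{b};z) - 1 = \sum_{k=1}^\infty \frac{(\mathbf{a})_k}{(\mathbf{b})_k}\frac{z^k}{k!}.
\end{equation*}
Next, I would reindex by replacing $k$ with $k+1$ so that the sum begins at $k=0$:
\begin{equation*}
{}_pF_q(\mathbf{a};\mathbf{b};z) - 1 = \sum_{k=0}^\infty \frac{(\mathbf{a})_{k+1}}{(\mathbf{b})_{k+1}}\frac{z^{k+1}}{(k+1)!}.
\end{equation*}

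Then I would apply~\eqref{eq:poch-relation} componentwise to each entry of $\mathbf{a}$ and $\mathbf{b}$, which yields $(\mathbf{a})_{k+1} = \prod\mathbf{a}\,(\mathbf{a}+1)_k$ and $(\mathbf{b})_{k+1} = \prod\mathbf{b}\,(\mathbf{b}+1)_k$. Pulling the constants out of the sum gives
\begin{equation*}
{}_pF_q(\mathbf{a};\mathbf{b};z) - 1 = \frac{\prod\mathbf{a}}{\prod\mathbf{b}}\,z\sum_{k=0}^\infty \frac{(\mathbf{a}+1)_k}{(\mathbf{b}+1)_k}\frac{z^{k}}{(k+1)!}.
\end{equation*}

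Finally, I would rewrite the factor $1/(k+1)!$ using $(k+1)! = (2)_k$, and insert the trivial factor $(1)_k/k! = 1$ to match the hypergeometric template exactly. This turns the remaining sum into
\begin{equation*}
\sum_{k=0}^\infty \frac{(1)_k(\mathbf{a}+1)_k}{(2)_k(\mathbf{b}+1)_k}\frac{z^{k}}{k!} = {}_{p+1}F_{q+1}(1,\mathbf{a}+1;2,\mathbf{b}+1;z),
\end{equation*}
completing the identification in~\eqref{eq:pFq-minus-1}. The only place I would pause to double-check is the bookkeeping of the $(k+1)!$ versus $(2)_k$ identity, but since $(2)_k = 2\cdot 3\cdots(k+1) = (k+1)!$, this is immediate.
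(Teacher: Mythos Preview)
Your proof is correct and follows essentially the same approach as the paper: write the series, drop the $k=0$ term, reindex, and apply~\eqref{eq:poch-relation} together with $(2)_k=(k+1)!$ and $(1)_k=k!$ to recognize the resulting series as ${}_{p+1}F_{q+1}(1,\mathbf{a}+1;2,\mathbf{b}+1;z)$.
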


\begin{proof}
This is again found by term-wise differentiation, reindexing and applying~\eqref{eq:poch-relation}.
\begin{equation*}
\begin{split}
{}_pF_q(\mathbf{a};\mathbf{b};z) - 1 &= \sum_{k=1}^\infty\frac{(\mathbf{a})_k}{(\mathbf{b})_k}\cdot\frac{z^k}{k!}
= \sum_{k=0}^\infty\frac{(\mathbf{a})_{k+1}}{(\mathbf{b})_{k+1}}\cdot\frac{z^{k+1}}{(k+1)!}\\
&= \frac{\prod\mathbf{a}}{\prod\mathbf{b}}z\
\sum_{k=0}^\infty\frac{(\mathbf{a}+1)_{k}(1)_k}{(\mathbf{b}+1)_{k}(2)_k}\cdot\frac{z^{k}}{k!}.
\end{split}
\end{equation*}
\end{proof}

\begin{lemma}\label{lem:pFq-linear-combination}
For any choice of hypergeometric coefficients and for any numbers $c$ and $d$,
\begin{equation}\label{eq:pFq-linear-combination}
c\;{}_{p+1}F_{q+1}(1,\mathbf{a};2,\mathbf{b};z) + d\;{}_{p}F_{q}(\mathbf{a};\mathbf{b};z) =
(c+d)\;{}_{p+2}F_{q+2}\left(1,\frac{c+2d}{d},\mathbf{a};2,\frac{c+d}{d},\mathbf{b};z\right).
\end{equation}
\end{lemma}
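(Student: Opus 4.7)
The plan is to verify the identity by expanding both sides as power series in $z$ and comparing the coefficient of $(\mathbf{a})_k/(\mathbf{b})_k \cdot z^k/k!$ for each $k \geq 0$. Two elementary facts will drive the whole argument: the reduction $(1)_k/(2)_k = 1/(k+1)$, which captures the effect of inserting a $1$ into the numerator list and a $2$ into the denominator list of any $_pF_q$; and the Pochhammer ratio $(\alpha+1)_k/(\alpha)_k = (\alpha+k)/\alpha$ from \eqref{eq:poch-ratio}, which I will invoke with the specific choice $\alpha := (c+d)/d$ so that $\alpha + 1 = (c+2d)/d$.

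On the left-hand side, once $(\mathbf{a})_k/(\mathbf{b})_k \cdot z^k/k!$ is pulled out, the remaining scalar coefficient is $c \cdot (1)_k/(2)_k + d = c/(k+1) + d$, which I would combine into $(c + d + kd)/(k+1)$. On the right-hand side, the extra parameter block $1,\alpha+1$ over $2,\alpha$ contributes $(1)_k(\alpha+1)_k/[(2)_k(\alpha)_k] = (\alpha+k)/[\alpha(k+1)]$; multiplying by the prefactor $c+d = d\alpha$ cancels the $\alpha$ in the denominator and yields $(d\alpha + kd)/(k+1) = (c + d + kd)/(k+1)$, matching the left-hand side coefficient for every $k$.

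The argument is essentially formal, and I do not anticipate a genuine obstacle beyond bookkeeping. The only subtle step is guessing the parameter substitution $\alpha = (c+d)/d$ in advance, but this is forced by requiring the linear-in-$k$ factor produced by the Pochhammer ratio on the right to match the linear-in-$k$ numerator $c+d+kd$ on the left. The manipulation also tacitly assumes $d \neq 0$ (so the extra parameters are defined) and that $(c+d)/d$ is not a non-positive integer (so the right-hand hypergeometric series is well defined); both conditions are easy to note and do not affect the matching of coefficients.
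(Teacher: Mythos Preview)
Your proposal is correct and follows essentially the same route as the paper: expand both sides term-wise, use $(1)_k/(2)_k = 1/(k+1)$ to simplify the extra parameter block, combine the resulting scalars into $(c+d+kd)/(k+1)$, and then recognize this ratio via the Pochhammer identity \eqref{eq:poch-ratio} with $\alpha = (c+d)/d$. Your explicit note about the tacit assumptions $d\neq 0$ and $(c+d)/d\notin\{0,-1,-2,\ldots\}$ is a useful addition that the paper leaves implicit.
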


\begin{proof}
This can be seen by term-wise addition and using~\eqref{eq:poch-ratio}:

\begin{equation*}
\begin{split}
c\;{}_{p+1}F_{q+1}(1,\mathbf{a};2,\mathbf{b};z) + d\;{}_{p}F_{q}(\mathbf{a};\mathbf{b};z) &= 
\sum_{k=0}^\infty\left(
\frac{c(\mathbf{a})_k(1)_k}{(\mathbf{b})_k(2)_k}
+ \frac{d(\mathbf{a})_k}{(\mathbf{b})_k}
\right)\frac{z^k}{k!}\\
&=
\sum_{k=0}^\infty\left(
\frac{c+dk+d}{k+1}
\right)\frac{(\mathbf{a})_k}{(\mathbf{b})_k}\frac{z^k}{k!}\\
&=
d\sum_{k=0}^\infty
\frac{k+\frac{c+d}{d}}{k+1}
\frac{(\mathbf{a})_k}{(\mathbf{b})_k}\frac{z^k}{k!}\\
&=
(c+d)\sum_{k=0}^\infty
\frac{\left(\frac{c+2d}{d}\right)_k(1)_k}{\left(\frac{c+d}{d}\right)_k(2)_k}
\frac{(\mathbf{a})_k}{(\mathbf{b})_k}\frac{z^k}{k!}.
\end{split}
\end{equation*}
\end{proof}

\subsubsection{Derivatives of the scalar multipliers}
\label{sec:derivatives}
In this section, we show how the tensor multipliers $\Mdel$, and in particular $M_b$ and $M_s$, can be recognized in terms of the derivatives of the scalar multipliers $\mdel$.

Differentiating $\mdel$ in \eqref{eq:multiplier-cosine} with respect to $\nu_i$ shows that
\begin{equation}\label{eq:dm-dnu}
\frac{\partial}{\partial\nu_i}\mdel(\nu) = \cdel\int_{B_\delta(0)}\frac{-w_i\sin(\nu\cdot w)}{\|w\|^{\beta}}\;dw.
\end{equation}
Substituting this into~\eqref{eq:Ms} yields the formula
\begin{equation}\label{eq:M_s-diff-fmla}
\left(M_s(\nu)\right)_{ij} = -\frac{\lam-\mu}{4}\frac{\partial}{\partial\nu_i}\mdel(\nu)\frac{\partial}{\partial\nu_j}\mdel(\nu).
\end{equation}
Differentiating a second time in~\eqref{eq:dm-dnu} (and replacing $\beta$ by $\beta+2$), yields
\begin{equation}\label{eq:dm-dnu2}
\frac{\partial^2}{\partial\nu_i\partial\nu_j}\mdelb(\nu) = \cdelb\int_{B_\delta(0)}\frac{-w_i w_j\cos(\nu\cdot w)}{\|w\|^{\beta+2}}\;dw,
\end{equation}
which implies that
\begin{equation*}
\int_{B_\delta(0)}\frac{w_i w_j\cos(\nu\cdot w)}{\|w\|^{\beta+2}}\;dw = -(\cdelb)^{-1}\frac{\partial^2}{\partial\nu_i\partial\nu_j}m^{\delta,\beta+2}(\nu).
\end{equation*}
Moreover,
\begin{equation}
\int_{B_\delta(0)}\frac{w_i w_j}{\|w\|^{\beta+2}}\;dw 
=\delta_{ij}\int_{B_\delta(0)}\frac{w_i^2}{\|w\|^{\beta+2}}\;dw
= 2(\cdelb)^{-1}\delta_{ij}.\label{eq:identity}
\end{equation}
Substituting these last two formulas into~\eqref{eq:Mb} shows that
\begin{equation}\label{eq:M_b-diff-fmla}
\left(M_b(\nu)\right)_{ij} = -\frac{(n+2)\mu \cdel}{\cdelb}
\left(
\frac{\partial^2}{\partial\nu_i\partial\nu_j}m^{\delta,\beta+2}(\nu) + 2\delta_{ij}
\right).
\end{equation}

The scalar multipliers $\mdel$ can be written as
\begin{equation}\label{eq:m-hypergeom-2}
\begin{split}
\mdel(\nu) &= -\|\nu\|^2
{}_2F_3\left(
1,\frac{n+2-\beta}{2};2,\frac{n+2}{2},\frac{n+4-\beta}{2};-\frac{1}{4}\|\nu\|^2\delta^2
\right)\\
&= \frac{4}{\delta^2}\left(-\frac{1}{4}\|\nu\|^2\delta^2\right)
{}_2F_3\left(
1,\frac{n+2-\beta}{2};2,\frac{n+2}{2},\frac{n+4-\beta}{2};-\frac{1}{4}\|\nu\|^2\delta^2
\right)\\
&= \frac{4}{\delta^2}f\left(-\frac{1}{4}\|\nu\|^2\delta^2\right)
\end{split}
\end{equation}
where $f$ has the form~\eqref{eq:hypergeometric-form} with $p=2$, $q=3$ and coefficients $\mathbf{a}$ and $\mathbf{b}$ defined to match~\eqref{eq:m-hypergeom-2}. Differentiating once shows that
\begin{equation}\label{eq:pd-m-f}
\frac{\partial}{\partial\nu_i} \mdel(\nu)
= \frac{4}{\delta^2}f'\left(-\frac{1}{4}\|\nu\|^2\delta^2\right)\cdot\left(-\frac{1}{2}\delta^2\nu_i\right)
= -2f'\left(-\frac{1}{4}\|\nu\|^2\delta^2\right)\nu_i.
\end{equation}
Differentiating a second time shows that
\begin{equation}\label{eq:pd2-m-f}
\begin{split}
\frac{\partial^2}{\partial\nu_i\partial\nu_j} \mdel(\nu)
&= -2f'\left(-\frac{1}{4}\|\nu\|^2\delta^2\right)\delta_{ij}
-2f''\left(-\frac{1}{4}\|\nu\|^2\delta^2\right)\nu_i\cdot\left(-\frac{1}{2}\delta^2\nu_j\right)\\
&= -2f'\left(-\frac{1}{4}\|\nu\|^2\delta^2\right)\delta_{ij}
+ \delta^2f''\left(-\frac{1}{4}\|\nu\|^2\delta^2\right)\nu_i\nu_j.
\end{split}
\end{equation}

The results of this subsection are summarized by the following.
%Equations \eqref{eq:M_s-diff-fmla} provide a re
\begin{prop} The tensor multipliers $M_b$ and $M_s$ can be represented in terms of the gradients of the scalar multipliers $\mdel$ as 
\begin{equation*}
%\label{eq:M_b-diff}
\begin{split}
M_b(\nu) &= -\frac{(n+2)\;\mu\; \cdel}{\cdelb}
\left(
\nabla\nabla m^{\delta,\beta+2}(\nu) + 2 I
\right),\\
&= -\frac{(n+2)\mu \cdel}{\cdelb}
\left(\delta^2f''\left(-\frac{1}{4}\|\nu\|^2\delta^2\right)\nu\otimes \nu + \left(2-2f'\left(-\frac{1}{4}\|\nu\|^2\delta^2\right)\right) I\right),
\end{split}
\end{equation*}
where  $f$ has the form~\eqref{eq:hypergeometric-form} with $p=2$, $q=3$ and coefficients $\mathbf{a}=\left(1,\frac{n-\beta}{2}\right)$ and $\mathbf{b}=\left(2,\frac{n+2}{2},\frac{n+2-\beta}{2}\right)$, 
and,
\begin{equation*}
%\label{eq:M_s-diff}
\begin{split}
M_s(\nu) &=  -\frac{\lam-\mu}{4}\;\nabla\mdel(\nu)\otimes\nabla\mdel(\nu),\\
&= -(\lam-\mu)\left(f'\left(-\frac{1}{4}\|\nu\|^2\delta^2\right)\right)^2 \; \nu\otimes \nu.
\end{split}
\end{equation*}
where $f$ has the form~\eqref{eq:hypergeometric-form} with $p=2$, $q=3$ and coefficients $\mathbf{a}=\left(1,\frac{n+2-\beta}{2}\right)$ and $\mathbf{b}=\left(2,\frac{n+2}{2},\frac{n+4-\beta}{2}\right)$.
\end{prop}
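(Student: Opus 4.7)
The plan is to assemble the proposition from identities already established in this subsection; essentially no new computation is required, only a repackaging in tensor notation together with careful bookkeeping of which hypergeometric parameters appear in each formula. For the $M_s$ statement, I would start from the component-wise identity~\eqref{eq:M_s-diff-fmla}, which reads $(M_s(\nu))_{ij} = -\frac{\lam-\mu}{4}\,\partial_{\nu_i}\mdel\,\partial_{\nu_j}\mdel$. The right-hand side is precisely the $(i,j)$ entry of the outer product $-\frac{\lam-\mu}{4}\nabla\mdel\otimes\nabla\mdel$, yielding the first displayed formula. For the hypergeometric form I substitute the chain-rule expression~\eqref{eq:pd-m-f}, noting that the function $f$ appearing there inherits its coefficients $\mathbf{a}=(1,\frac{n+2-\beta}{2})$ and $\mathbf{b}=(2,\frac{n+2}{2},\frac{n+4-\beta}{2})$ from the hypergeometric representation~\eqref{eq:m-hypergeom-2}. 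This gives $\nabla\mdel = -2f'(-\frac{1}{4}\|\nu\|^2\delta^2)\,\nu$, and the factor $4$ arising from squaring exactly cancels the $\frac{1}{4}$ prefactor to produce the second displayed formula.

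For the $M_b$ statement the procedure is parallel but one notch more involved. Starting from~\eqref{eq:M_b-diff-fmla}, I recognize the Hessian of $m^{\delta,\beta+2}$ and the identity matrix to obtain the first displayed tensor form. To pass to the hypergeometric expression I apply~\eqref{eq:pd2-m-f} with $\beta$ replaced by $\beta+2$ throughout; this parameter shift moves the coefficients in $f$ from those attached to $\mdel$ to $\mathbf{a}=(1,\frac{n-\beta}{2})$ and $\mathbf{b}=(2,\frac{n+2}{2},\frac{n+2-\beta}{2})$, which is exactly the pair claimed in the proposition. The resulting Hessian takes the form $\nabla\nabla m^{\delta,\beta+2}(\nu) = -2f'(-\frac{1}{4}\|\nu\|^2\delta^2)\,I + \delta^2 f''(-\frac{1}{4}\|\nu\|^2\delta^2)\,\nu\otimes\nu$, and adding $2I$ regroups the coefficient of $I$ into $2-2f'(-\frac{1}{4}\|\nu\|^2\delta^2)$ to match the stated expression.

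There is no real obstacle here: the proposition is a bookkeeping result that packages identities already derived in Sections~\ref{sec:derivatives} and~\ref{sec:hypergeometric-formulas}. The only item requiring genuine care is to keep track of which $f$ (that is, which parameter set) corresponds to $\mdel$ versus $m^{\delta,\beta+2}$, since the two differ precisely by the $\beta\mapsto\beta+2$ shift arising from the extra factor of $\|w\|^2$ in the kernel of $\LLb$ compared with the kernel of the nonlocal Laplacian~\eqref{eq:nonlocal_laplacian}.
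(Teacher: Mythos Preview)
Your proposal is correct and matches the paper's approach exactly: the proposition is explicitly presented in the paper as a summary of the identities \eqref{eq:M_s-diff-fmla}, \eqref{eq:M_b-diff-fmla}, \eqref{eq:pd-m-f}, and \eqref{eq:pd2-m-f} already derived in Section~\ref{sec:derivatives}, with no additional proof given. Your identification of the $\beta\mapsto\beta+2$ shift as the only point requiring care is also on target.
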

This result together with the formulas derived in Section~\ref{sec:hypergeometric-formulas} allow us to express the tensor multipliers as hypergeometric functions.

\subsubsection{The  tensor multipliers: Hypergeometric representations}
\label{sec:tensors-hypergeometric}
In this section, we provide a simple and explicit form for the tensor multipliers $M_b(\nu)$ and $M_s(\nu)$.

Equations~\eqref{eq:M_s-diff-fmla},~\eqref{eq:pd-m-f} and Lemma~\ref{lem:hypergeometric-derivatives} show that $M_s(\nu)$ is a rank-one symmetric matrix of the form
\begin{equation}
M_s(\nu) = \alpha_s(\nu)\nu\otimes\nu,
\label{eq:M_s2}
\end{equation}
with
\begin{equation}
\begin{split}
\alpha_s(\nu) &= -\frac{\lam-\mu}{4} \;\left(-2f'\left(-\frac{1}{4}\|\nu\|^2\delta^2\right)\right)^2\\
&= -(\lam-\mu)\left(f'\left(-\frac{1}{4}\|\nu\|^2\delta^2\right)\right)^2\\
&= -(\lam-\mu)
{}_3F_4\left(
1,2,\frac{n+2-\beta}{2};1,2,\frac{n+2}{2},\frac{n+4-\beta}{2};-\frac{1}{4}\|\nu\|^2\delta^2
\right)^2\\
&= -(\lam-\mu)
{}_1F_2\left(
\frac{n+2-\beta}{2};\frac{n+2}{2},\frac{n+4-\beta}{2};-\frac{1}{4}\|\nu\|^2\delta^2
\right)^2.
\end{split}\label{eq:alpha_s}
\end{equation}

Equations~\eqref{eq:M_b-diff-fmla},~\eqref{eq:pd2-m-f} and Lemma~\ref{lem:hypergeometric-derivatives} show that $M_b(\nu)$ is a symmetric matrix of the form
\begin{equation}
M_b(\nu) = \alpha_{b1}(\nu)I + \alpha_{b2}(\nu)\nu\otimes\nu.\label{eq:M_b2}
\end{equation}
The coefficient of the identity matrix is (keeping in mind that this time we are differentiating $\mdelb$),
\begin{equation}
\begin{split}
\alpha_{b1}(\nu) &= -\frac{(n+2)\mu \cdel}{\cdelb}
\left(2-2f'\left(-\frac{1}{4}\|\nu\|^2\delta^2\right)\right)\\
&= \frac{2(n+2)\mu \cdel}{\cdelb}\left(
{}_3F_4\left(
1,2,\frac{n-\beta}{2};1,2,\frac{n+2}{2},\frac{n+2-\beta}{2};-\frac{1}{4}\|\nu\|^2\delta^2
\right)
-1\right)\\
&= \frac{2(n+2)\mu \cdel}{\cdelb}\left(
{}_1F_2\left(
\frac{n-\beta}{2};\frac{n+2}{2},\frac{n+2-\beta}{2};-\frac{1}{4}\|\nu\|^2\delta^2
\right)
-1\right).
\end{split}\label{eq:alpha_b1-1}
\end{equation}
Applying Lemma~\ref{lem:pFq-minus-1} simplifies this expression to show that
\begin{equation}
\begin{split}
\alpha_{b1}(\nu)
&= \frac{2(n+2)\mu \cdel}{\cdelb}
\left(\frac{2(n-\beta)}{(n+2)(n+2-\beta)}\right)
\left(-\frac{1}{4}\|\nu\|^2\delta^2\right)\\
&\qquad\times
{}_2F_3\left(
1,\frac{n+2-\beta}{2};2,\frac{n+4}{2},\frac{n+4-\beta}{2};-\frac{1}{4}\|\nu\|^2\delta^2
\right)\\
&= -\mu\|\nu\|^2{}_2F_3\left(
1,\frac{n+2-\beta}{2};2,\frac{n+4}{2},\frac{n+4-\beta}{2};-\frac{1}{4}\|\nu\|^2\delta^2
\right).
\end{split}\label{eq:alpha_b1-2}
\end{equation}

The other coefficient can be computed as
\begin{equation}
\begin{split}
\alpha_{b2}(\nu) &= -\frac{(n+2)\mu \cdel}{\cdelb}
\delta^2 f''\left(-\frac{1}{4}\|\nu\|^2\delta^2\right)\\
&= -\frac{(n+2)\mu \cdel}{\cdelb}
\left(\frac{2(n-\beta)}{(n+2)(n+2-\beta)}\right)\delta^2\\
&\qquad\times{}_3F_4\left(
2,3,\frac{n+2-\beta}{2};2,3,\frac{n+4}{2},\frac{n+4-\beta}{2};-\frac{1}{4}\|\nu\|^2\delta^2
\right)\\
&= -2\mu\;{}_1F_2\left(
\frac{n+2-\beta}{2};\frac{n+4}{2},\frac{n+4-\beta}{2};-\frac{1}{4}\|\nu\|^2\delta^2
\right).
\end{split}\label{eq:alpha_b2}
\end{equation}

It is interesting to see how these formulas combine to provide a formula for the trace of the tensor $M_b$.  Since we know all eigenvalues of $M_b$, using \eqref{eq:M_s2}--\eqref{eq:alpha_b2}, we can compute the trace as 

\begin{equation*}
\begin{split}
\trace M_b(\nu) &= n\alpha_{b1}+\|\nu\|^2\alpha_{b2}
\\ &= -n\mu\|\nu\|^2\;{}_2F_3\left(
1,\frac{n+2-\beta}{2};2,\frac{n+4}{2},\frac{n+4-\beta}{2};-\frac{1}{4}\|\nu\|^2\delta^2
\right)\\
&\qquad
-2\mu\|\nu\|^2\;{}_1F_2\left(
\frac{n+2-\beta}{2};\frac{n+4}{2},\frac{n+4-\beta}{2};-\frac{1}{4}\|\nu\|^2\delta^2
\right).
\end{split}
\end{equation*}

Applying Lemma~\ref{lem:pFq-linear-combination}, cancelling the repeated $(n+4)/2$ term from the hypergeometric series coefficients, and then applying~\eqref{eq:multiplier-general} shows that
\begin{equation*}
\begin{split}
\trace M_b(\nu) &= -(n+2)\mu\|\nu\|^2\;
{}_3F_4\left(
1,\frac{n+4}{2},\frac{n+2-\beta}{2};2,\frac{n+2}{2},\frac{n+4}{2},\frac{n+4-\beta}{2};-\frac{1}{4}\|\nu\|^2\delta^2
\right)\\
&= -(n+2)\mu\|\nu\|^2\;
{}_2F_3\left(
1,\frac{n+2-\beta}{2};2,\frac{n+2}{2},\frac{n+4-\beta}{2};-\frac{1}{4}\|\nu\|^2\delta^2
\right)\\
&= (n+2)\mu \mdel(\nu).
\end{split}
\end{equation*}
This same formula can also be derived directly from~\eqref{eq:Mb}, since
\begin{equation*}
\frac{\trace\left(w\otimes w\right)}{\|w\|^{\beta+2}}
\left(\cos(\nu\cdot w)-1\right) =
\frac{\cos(\nu\cdot w)-1}{\|w\|^\beta},
\end{equation*}
yielding the integrand in~\eqref{eq:multiplier-general}.

The main results of this subsection are summarized as follows.
\begin{prop} 
\label{prop:Mb-Ms-hyperg}
The tensor multipliers $M_b$ and $M_s$ have the following hypergeometric  representations
\begin{align*}
%\label{eq:M_b-diff}
%\begin{split}
M_b(\nu) &= -\mu\|\nu\|^2\;{}_2F_3\left(
1,\frac{n+2-\beta}{2};2,\frac{n+4}{2},\frac{n+4-\beta}{2};-\frac{1}{4}\|\nu\|^2\delta^2
\right)\;I\\
&\quad\;\;-2\mu\;{}_1F_2\left(
\frac{n+2-\beta}{2};\frac{n+4}{2},\frac{n+4-\beta}{2};-\frac{1}{4}\|\nu\|^2\delta^2
\right)\nu\otimes\nu,\\
\intertext{and,}
M_s(\nu) &= -(\lam-\mu)\,\,
{}_1F_2\left(
\frac{n+2-\beta}{2};\frac{n+2}{2},\frac{n+4-\beta}{2};-\frac{1}{4}\|\nu\|^2\delta^2
\right)^2\;\;\nu\otimes\nu.
%\end{split}
\end{align*}
\end{prop}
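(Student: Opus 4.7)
The proposition simply collects the hypergeometric representations derived in equations \eqref{eq:M_s2}--\eqref{eq:alpha_b2} of the preceding analysis. The plan is therefore to assemble the three ingredients that have been set up: (i) the differential identities \eqref{eq:M_s-diff-fmla} and \eqref{eq:M_b-diff-fmla} that express $M_s$ and $M_b$ through the first and second derivatives of the scalar multipliers $\mdel$ and $\mdelb$; (ii) the hypergeometric form \eqref{eq:multiplier-general} of $\mdel$, or equivalently the normalized form $\mdel(\nu)=(4/\delta^2)f(-\tfrac{1}{4}\|\nu\|^2\delta^2)$ in \eqref{eq:m-hypergeom-2}; and (iii) the manipulation lemmas of Section~\ref{sec:hypergeometric-formulas}.

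For $M_s$, I would substitute \eqref{eq:pd-m-f} into \eqref{eq:M_s-diff-fmla} to get a rank-one tensor with prefactor $-(\lam-\mu)(f')^2$. Applying Lemma~\ref{lem:hypergeometric-derivatives} with $p=2$, $q=3$ and $(\mathbf{a},\mathbf{b})$ matching \eqref{eq:m-hypergeom-2} produces $f'$ as a ${}_3F_4$. The parameters $1$ and $2$ appear in both numerator and denominator and cancel, collapsing the series to the advertised ${}_1F_2$, which squared gives the claimed formula for $M_s(\nu)$.

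For $M_b$, the work is more delicate because \eqref{eq:M_b-diff-fmla} involves $\mdelb$ (so the role of $\beta$ is shifted to $\beta+2$ in $f$'s parameters). Substituting \eqref{eq:pd2-m-f} yields the decomposition $M_b(\nu)=\alpha_{b1}(\nu) I+\alpha_{b2}(\nu)\,\nu\otimes\nu$. The coefficient $\alpha_{b2}$ is straightforward: Lemma~\ref{lem:hypergeometric-derivatives} gives $f''$ as a ${}_3F_4$, the repeated parameters $2,3$ cancel, and the prefactor $\prod\mathbf{a}'/\prod\mathbf{b}'=2(n-\beta)/[(n+2)(n+2-\beta)]$ combines with $\delta^2(n+2)\mu\cdel/\cdelb$, computed from \eqref{eq:cdel-explicit}, to produce the clean constant $-2\mu$ in front of the ${}_1F_2$. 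The coefficient $\alpha_{b1}$ is the main obstacle: from \eqref{eq:pd-m-f} one obtains a ${}_1F_2-1$ structure rather than a single series. The key step is to invoke Lemma~\ref{lem:pFq-minus-1}, which rewrites this difference as $-\tfrac{1}{4}\|\nu\|^2\delta^2$ times a ${}_2F_3$; combined again with the explicit value of $\cdel/\cdelb$, all constants collapse to $-\mu\|\nu\|^2$, giving the stated ${}_2F_3$ representation for the coefficient of $I$.

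The proof is in essence bookkeeping, but the hard part is keeping the parameter shifts and the Pochhammer prefactors aligned: the $\beta\mapsto\beta+2$ shift in the $M_b$ computation changes $(\mathbf{a},\mathbf{b})$, and one must verify that the ratio $\cdel/\cdelb=\delta^2(n-\beta)/[(n+2-\beta)(n+2)]\cdot(n+2)$ (read off \eqref{eq:cdel-explicit}) cancels exactly with the rational factor produced by Lemma~\ref{lem:hypergeometric-derivatives} and, in the $\alpha_{b1}$ case, with the additional factor coming from Lemma~\ref{lem:pFq-minus-1}. Once this cancellation is verified, the three displayed identities of the proposition are simply restatements of \eqref{eq:alpha_s}, \eqref{eq:alpha_b1-2}, and \eqref{eq:alpha_b2}.
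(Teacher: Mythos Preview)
Your proposal is correct and follows exactly the route the paper takes: the proposition is a summary statement, and its proof in the paper consists precisely of assembling \eqref{eq:M_s-diff-fmla}, \eqref{eq:M_b-diff-fmla}, \eqref{eq:pd-m-f}, \eqref{eq:pd2-m-f} with Lemma~\ref{lem:hypergeometric-derivatives} and Lemma~\ref{lem:pFq-minus-1}, then cancelling repeated hypergeometric parameters and the constants, just as you describe. One small slip: the ratio you quote should read $\cdel/\cdelb=(n+2-\beta)/\bigl((n-\beta)\delta^2\bigr)$, but this does not affect the argument.
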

An immediate consequence of this result is the convergence of the multipliers of $\LLdel$ to the multipliers of $\Nav$ in the limits as $\delta\rightarrow 0$ or as $\beta\rightarrow n+2$.
\begin{prop}
%Let $\delta>0$ and $\beta\le n+2$. Then, for any 
Let $\beta\le n+2$. Then
\begin{eqnarray*}
\lim_{\delta\rightarrow 0^+} M_b(\nu)&=&-\mu \|\nu\|^2\;I-2\mu\; \nu\otimes \nu,\\
\lim_{\delta\rightarrow 0^+} M_s(\nu)&=&-(\lam-\mu)\; \nu\otimes \nu,\\
\lim_{\delta\rightarrow 0^+} \Mdel(\nu)&=&-(\lam+\mu) \nu\otimes\nu -\mu \|\nu\|^2 \; I=M^{\Nav}(\nu).
\end{eqnarray*}
Moreover, let $\delta>0$. Then
\begin{eqnarray*}
\lim_{\beta\rightarrow n+2^-} M_b(\nu)&=&-\mu \|\nu\|^2\;I-2\mu\; \nu\otimes \nu,\\
\lim_{\beta\rightarrow n+2^-} M_s(\nu)&=&-(\lam-\mu)\; \nu\otimes \nu,\\
\lim_{\beta\rightarrow n+2^-} \Mdel(\nu)&=&-(\lam+\mu) \nu\otimes\nu -\mu \|\nu\|^2 \; I=M^{\Nav}(\nu).
\end{eqnarray*}
\end{prop}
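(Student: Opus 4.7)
The plan is to obtain both limits as a direct consequence of the hypergeometric formulas of Proposition~\ref{prop:Mb-Ms-hyperg}, by observing that every generalized hypergeometric series appearing there collapses to $1$ in each of the two limiting regimes. Once the limits of $M_b(\nu)$ and $M_s(\nu)$ are computed separately, their sum is matched termwise against the explicit formula~\eqref{eq:N-mutipliers} for $M^{\Nav}(\nu)$, which immediately yields the third identity in each group.

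First I would address the local limit $\delta\to 0^+$. In Proposition~\ref{prop:Mb-Ms-hyperg} the argument of every hypergeometric factor is $z=-\tfrac{1}{4}\|\nu\|^2\delta^2$, which tends to $0$. For $\beta\le n+2$ fixed, the denominator parameters $\tfrac{n+2}{2}$, $\tfrac{n+4}{2}$, $\tfrac{n+4-\beta}{2}$ are positive and in particular avoid non-positive integers, so each ${}_pF_q$ is entire in $z$. The series definition makes it clear that each such factor equals $1$ at $z=0$, since only the $k=0$ term of $\sum_k \frac{(\mathbf{a})_k}{(\mathbf{b})_k}\frac{z^k}{k!}$ survives. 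Substituting into the formulas for $M_b$ and $M_s$ then gives the stated limits, and summing recovers exactly $-(\lam+\mu)\nu\otimes\nu-\mu\|\nu\|^2 I = M^{\Nav}(\nu)$.

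Next I would treat the limit $\beta\to (n+2)^-$ with $\delta>0$ fixed. The key observation is that the parameter $a(\beta):=\tfrac{n+2-\beta}{2}$ tends to $0$ and appears as a numerator parameter in every hypergeometric series of Proposition~\ref{prop:Mb-Ms-hyperg}. Since $(a(\beta))_k = a(\beta)(a(\beta)+1)\cdots(a(\beta)+k-1)$ carries a factor of $a(\beta)$, every term with $k\ge 1$ vanishes in the limit, while the $k=0$ term equals $1$. Hence, termwise, each hypergeometric factor tends to $1$, so the limits of $M_b(\nu)$ and $M_s(\nu)$ agree with the Navier expressions exactly as in the first case.

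The main (mild) obstacle is the interchange of limit and infinite summation in the second case. I would justify it by a dominated-convergence argument for series: on a left-neighborhood $\beta\in(n+2-\epsilon,n+2)$ the Pochhammer ratios $|(a(\beta))_k/(\mathbf{b}(\beta))_k|$ are uniformly bounded (by the corresponding ratios at $\beta=n+2-\epsilon$, say), so, for the fixed argument $z=-\tfrac{1}{4}\|\nu\|^2\delta^2$, the terms are dominated by a $\beta$-independent summable sequence of the form $C_k|z|^k/k!$. Alternatively, one may invoke the standard fact that ${}_pF_q(\mathbf{a};\mathbf{b};z)$ is jointly continuous in its parameters on any compact set avoiding non-positive integer values of the $b_j$'s. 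Either route closes the argument.
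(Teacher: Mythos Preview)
Your proposal is correct and follows exactly the paper's approach: the paper's proof is a single sentence stating that the result follows because the hypergeometric functions in Proposition~\ref{prop:Mb-Ms-hyperg} are equal to $1$ under the considered limits. Your write-up simply supplies the details (the $z\to 0$ argument for the $\delta$-limit, the vanishing numerator parameter $(n+2-\beta)/2\to 0$ for the $\beta$-limit, and a justification of the limit--sum interchange) that the paper leaves implicit.
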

\begin{proof}
This result follows from  the fact the hypergeometric functions in Proposition \ref{prop:Mb-Ms-hyperg}  are equal to $1$ under the considered limits.  
\begin{remark}
The same results hold true for the limit from above $\beta\rightarrow n+2^+$.
\end{remark}
\end{proof}
\subsubsection{Eigenvalues of the tensor multipliers}\label{sec:eigenvectors}
The form of the  multiplier $\Mdel(\nu)$ is found through  \eqref{eq:M_s2} and \eqref{eq:M_b2}, 
\begin{equation}
\Mdel(\nu) = \alpha_{b1}(\nu)I + (\alpha_{b2}(\nu)+\alpha_s(\nu))\nu\otimes\nu,\label{eq:M2}
\end{equation}
where $\alpha_{b1}$, $\alpha_{b2}$ and $\alpha_s$ are given by \eqref{eq:alpha_b1-2}, \eqref{eq:alpha_b2}, and \eqref{eq:alpha_s}, repectively.
This implies that $\Mdel$ is a real symmetric matrix. Moreover, $\nu$ is an eigenvector of $\Mdel(\nu)$,  
\begin{equation}
\Mdel(\nu)\nu=\lambda_1(\nu)\nu,\label{eq:lam_1}
\end{equation}
where 
\[
\lambda_1(\nu)=\alpha_{b1}(\nu)+(\alpha_{b2}(\nu)+\alpha_s(\nu))\|\nu\|^2.
\]
Using \eqref{eq:alpha_s}, \eqref{eq:alpha_b1-2}, and \eqref{eq:alpha_b2}, this eigenvalue, which is associated with the direction of $\nu$, has the following hypergeometric representation
%In the direction of $\nu$, the eigenvalue is
\begin{equation}
\label{eq:lambda1}
\begin{split}
\lambda_1(\nu)=-\|\nu\|^2\Bigg(&
\mu\;{}_2F_3\left(
1,\frac{n+2-\beta}{2};2,\frac{n+4}{2},\frac{n+4-\beta}{2};-\frac{1}{4}\|\nu\|^2\delta^2
\right) \\
&+
2\mu\;{}_1F_2\left(
\frac{n+2-\beta}{2};\frac{n+4}{2},\frac{n+4-\beta}{2};-\frac{1}{4}\|\nu\|^2\delta^2
\right)\\
&+
(\lam-\mu)\;
{}_1F_2\left(
\frac{n+2-\beta}{2};\frac{n+2}{2},\frac{n+4-\beta}{2};-\frac{1}{4}\|\nu\|^2\delta^2
\right)^2
\Bigg).
\end{split}
\end{equation}
An alternative expression for this eigenvalue can be obtained by using Lemma~\ref{lem:pFq-linear-combination} to combine the first two hypergeometric functions, yielding

\begin{equation}
\label{eq:lambda1-alt}
\begin{split}
\lambda_1(\nu)=-\|\nu\|^2\Bigg(&
3\mu\;{}_3F_4\left(
1,\frac{5}{2},\frac{n+2-\beta}{2};2,\frac{3}{2},\frac{n+4}{2},\frac{n+4-\beta}{2};-\frac{1}{4}\|\nu\|^2\delta^2
\right) \\
&+
(\lam-\mu)\;
{}_1F_2\left(
\frac{n+2-\beta}{2};\frac{n+2}{2},\frac{n+4-\beta}{2};-\frac{1}{4}\|\nu\|^2\delta^2
\right)^2
\Bigg).
\end{split}
\end{equation}

The other $n-1$ eigenvectors are orthogonal to $\nu$. Denote by $\nu^\perp$ a vector in $\mathbb{R}^n$ orthogonal to $\nu$. Then
\begin{equation}
\Mdel(\nu)\nu^\perp=\lambda_2(\nu)\nu^\perp,\label{eq:lam_2}
\end{equation}
where 
\begin{equation}
\lambda_2(\nu)=\alpha_{b1}(\nu).\label{eq:lam2-alpha_b1}
\end{equation}
Using  \eqref{eq:alpha_b1-2}, this eigenvalue, which is associated with orthogonal directions to $\nu$, has the following hypergeometric representation
%The eigenvalue of $M(\nu)$ associated with orthogonal directions to $\nu$ is
\begin{equation}
\label{eq:lambda2}
\lambda_2(\nu)=-\mu\|\nu\|^2{}_2F_3\left(
1,\frac{n+2-\beta}{2};2,\frac{n+4}{2},\frac{n+4-\beta}{2};-\frac{1}{4}\|\nu\|^2\delta^2
\right).
\end{equation}
The results in this subsection are summarized in the following.    
\begin{theorem}
\label{thm-eigenvalues-hypergeometric}
For $\nu\in \Rn$, the eigenvalue $\lambda_1(\nu)$ of $\Mdel(\nu)$, associated with the direction of $\nu$, is given by the hypergeometric representation \eqref{eq:lambda1-alt} and 
the eigenvalue $\lambda_2(\nu)$ of $\Mdel(\nu)$, associated with orthogonal directions to $\nu$, is given by
the hypergeometric representation \eqref{eq:lambda2}.
\end{theorem}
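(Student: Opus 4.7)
The plan is to exploit the explicit structure of $\Mdel(\nu)$ obtained from the hypergeometric representations in Proposition \ref{prop:Mb-Ms-hyperg}. First I would add the expressions for $M_b(\nu)$ and $M_s(\nu)$ given there to write
\[
\Mdel(\nu) = \alpha_{b1}(\nu)\,I + \bigl(\alpha_{b2}(\nu) + \alpha_s(\nu)\bigr)\,\nu\otimes\nu,
\]
where $\alpha_{b1}$, $\alpha_{b2}$, and $\alpha_s$ are the hypergeometric scalars already computed in \eqref{eq:alpha_s}, \eqref{eq:alpha_b1-2}, and \eqref{eq:alpha_b2}. Since this is a linear combination of the identity and a rank-one symmetric tensor $\nu\otimes\nu$, $\Mdel(\nu)$ is a real symmetric matrix whose eigenstructure is transparent.

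Next I would identify the eigenvectors and eigenvalues directly. Applying $\Mdel(\nu)$ to $\nu$ gives $(\nu\otimes\nu)\nu = \|\nu\|^2\nu$, so $\nu$ is an eigenvector with eigenvalue $\lambda_1(\nu) = \alpha_{b1}(\nu) + (\alpha_{b2}(\nu)+\alpha_s(\nu))\|\nu\|^2$. For any $\nu^\perp \in \Rn$ orthogonal to $\nu$, one has $(\nu\otimes\nu)\nu^\perp = 0$, so $\nu^\perp$ is an eigenvector with eigenvalue $\lambda_2(\nu) = \alpha_{b1}(\nu)$. This accounts for all $n$ eigenvalues (with $\lambda_2$ of multiplicity $n-1$), so the spectral description is complete.

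The remaining step is to substitute the hypergeometric expressions. For $\lambda_2$, I would directly plug \eqref{eq:alpha_b1-2} into $\lambda_2 = \alpha_{b1}$ to obtain \eqref{eq:lambda2}. For $\lambda_1$, I would substitute \eqref{eq:alpha_b1-2}, \eqref{eq:alpha_b2}, and \eqref{eq:alpha_s} into the formula above, factor out $\|\nu\|^2$, and collect the three hypergeometric contributions to arrive at \eqref{eq:lambda1}. To obtain the alternative form \eqref{eq:lambda1-alt}, I would invoke Lemma \ref{lem:pFq-linear-combination} to merge the $\mu\,{}_2F_3$ term from $\alpha_{b1}\|\nu\|^2/\|\nu\|^2$ and the $2\mu\,{}_1F_2$ term from $\alpha_{b2}$ into a single ${}_3F_4$ (with $c=1$, $d=2$ yielding the parameters $5/2$ and $3/2$).

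I do not anticipate a genuine obstacle; the work is essentially bookkeeping, since Proposition \ref{prop:Mb-Ms-hyperg} already supplies the tensor structure and Lemma \ref{lem:pFq-linear-combination} supplies the combining identity. The one point requiring care is verifying that the specific constants $(c,d)=(1,2)$ in the application of Lemma \ref{lem:pFq-linear-combination} produce exactly the numerator/denominator pair $(5/2, 3/2)$ that appears in \eqref{eq:lambda1-alt}, and that the shared hypergeometric parameters $\frac{n+2-\beta}{2}$, $\frac{n+4}{2}$, $\frac{n+4-\beta}{2}$ propagate correctly under the lemma. Beyond that, the argument is a direct assembly of previously derived formulas.
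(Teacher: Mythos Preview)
Your proposal is correct and follows essentially the same route as the paper: the argument in Section~\ref{sec:eigenvectors} writes $\Mdel(\nu)=\alpha_{b1}(\nu)I+(\alpha_{b2}(\nu)+\alpha_s(\nu))\,\nu\otimes\nu$, reads off the eigenvectors $\nu$ and $\nu^\perp$ with eigenvalues $\alpha_{b1}+(\alpha_{b2}+\alpha_s)\|\nu\|^2$ and $\alpha_{b1}$, substitutes \eqref{eq:alpha_s}, \eqref{eq:alpha_b1-2}, \eqref{eq:alpha_b2}, and then applies Lemma~\ref{lem:pFq-linear-combination} (with $c=\mu$, $d=2\mu$, equivalently your $(1,2)$ after factoring $\mu$) to pass from \eqref{eq:lambda1} to \eqref{eq:lambda1-alt}. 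The parameter check you flagged indeed produces $(c+2d)/d=5/2$ and $(c+d)/d=3/2$, so there is no obstacle.
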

\begin{cor}
\label{cor:eigenvalues-converge}
Let $\nu\in\Rn$. Then, the tensor multipliers $\Mdel(\nu)$ and $M^{\Nav}(\nu)$ have the same set of eigenvectors: $\nu$ and $n-1$ eigenvectors orthogonal to $\nu$. Moreover, the eigenvalues of $\Mdel(\nu)$ converge to the eigenvalues of $M^{\Nav}(\nu)$ in the local limits as follows: for $\beta\le n+2$,
\begin{align*}
\lim_{\delta\rightarrow 0^+} \lambda_1(\nu)&=-(\lam+2\mu)\|\nu\|^2,\\
\lim_{\delta\rightarrow 0^+} \lambda_2(\nu)&=-\mu \|\nu\|^2,\\
\intertext{ and for $\delta>0,$}
\lim_{\beta\rightarrow n+2^-} \lambda_1(\nu)&=-(\lam+2\mu)\|\nu\|^2,\\
\lim_{\beta\rightarrow n+2^-} \lambda_2(\nu)&=-\mu\|\nu\|^2.
\end{align*}
\end{cor}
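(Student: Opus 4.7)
The plan is to split the proof into two independent parts: the structural claim that $\Mdel(\nu)$ and $M^{\Nav}(\nu)$ share an eigenbasis, and the limiting claim for the eigenvalues.

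For the eigenvector claim, the key observation is that by \eqref{eq:N-mutipliers} and \eqref{eq:M2}, both tensors have the common form $aI+b\,\nu\otimes\nu$ for scalars $a,b$ that depend on $\nu$, namely $a=-\mu\|\nu\|^2$, $b=-(\lam+\mu)$ in the Navier case and $a=\alpha_{b1}(\nu)$, $b=\alpha_{b2}(\nu)+\alpha_s(\nu)$ in the peridynamic case. Any real matrix of this form is diagonalized by $\nu$, with eigenvalue $a+b\|\nu\|^2$, together with any $n-1$ orthonormal directions in $\nu^\perp$, each with eigenvalue $a$; this follows by direct application of the matrix to the two kinds of vectors. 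Hence the shared eigenbasis is immediate.

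For the eigenvalue limits, the plan is to invoke Theorem~\ref{thm-eigenvalues-hypergeometric} and reduce the problem to showing that each hypergeometric factor ${}_pF_q\bigl(\cdots;-\tfrac{1}{4}\|\nu\|^2\delta^2\bigr)$ appearing in \eqref{eq:lambda1} and \eqref{eq:lambda2} tends to $1$ in each of the two limits. When $\delta\to 0^+$ the hypergeometric argument tends to $0$, and the limit value is $1$ immediately from the definition of ${}_pF_q$ as a power series with constant term $1$. When $\beta\to(n+2)^-$, the parameter $(n+2-\beta)/2$ appears in the numerator of every relevant hypergeometric function and tends to $0^+$; applying Lemma~\ref{lem:pFq-minus-1} expresses each difference ${}_pF_q-1$ with $(n+2-\beta)/2$ as an explicit prefactor, so the difference vanishes while the remaining ${}_{p+1}F_{q+1}$ factor stays bounded (its parameters remain in a compact set bounded away from the nonpositive integers). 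Substituting the common limit $1$ into \eqref{eq:lambda1} and \eqref{eq:lambda2} yields $\lambda_1(\nu)\to -\|\nu\|^2\bigl(\mu+2\mu+(\lam-\mu)\bigr)=-(\lam+2\mu)\|\nu\|^2$ and $\lambda_2(\nu)\to -\mu\|\nu\|^2$, which are precisely the eigenvalues of $M^{\Nav}(\nu)=-(\lam+\mu)\nu\otimes\nu-\mu\|\nu\|^2 I$ associated with the direction $\nu$ and with any orthogonal direction, respectively.

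The only delicate point is the passage to the limit $\beta\to(n+2)^-$ inside the hypergeometric series, since the series argument does not tend to zero. Lemma~\ref{lem:pFq-minus-1} is the tool that converts this into a one-line computation by isolating the factor $(n+2-\beta)/2$; beyond this observation I do not anticipate any obstacles.
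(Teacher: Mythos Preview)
Your proposal is correct and follows the same approach as the paper: the paper states the corollary without a separate proof, relying (as in the proof of the preceding proposition on the multipliers) on the one-line observation that the hypergeometric functions in the formulas of Theorem~\ref{thm-eigenvalues-hypergeometric} are equal to $1$ under the considered limits. Your write-up supplies somewhat more justification for the $\beta\to(n+2)^-$ case by invoking Lemma~\ref{lem:pFq-minus-1} to isolate the vanishing numerator parameter $(n+2-\beta)/2$, which is a perfectly valid way to make the paper's implicit claim explicit; the eigenvector part is likewise just a restatement of the structure already recorded in~\eqref{eq:M2} and~\eqref{eq:N-mutipliers}.
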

\subsubsection{Integral representations for the eigenvalues of the peridynamic multipliers}
\label{sec:eigenvalues-integral}
In this section, we provide  integral representations for the eigenvalues $\lambda_1$ and $\lambda_2$, given by \eqref{eq:lambda1} and \eqref{eq:lambda2}, respectively.
\begin{theorem}
\label{thm-eigenvalues-integral}
The eigenvalue of $\Mdel(\nu)$ associated with the direction of $\nu$ is given by
\begin{eqnarray}
\nonumber\lambda_1(\nu) &=& (n+2)\mu \cdel
\int_{B_\delta(0)}\frac{(\nu\cdot w)^2}{\|\nu\|^2 \|w\|^{\beta+2} }(\cos(\nu\cdot w)-1)\;dw \\
&&\hspace{1.5in}-(\lam-\mu)\left(\frac{\cdel}{2}
\int_{B_\delta(0)}\frac{\nu\cdot w}{\|\nu\|\|w\|^\beta }\sin(\nu\cdot w)\;dw\right)^2. \label{eq:lam1}
\end{eqnarray}
The eigenvalue of $\Mdel(\nu)$ associated with orthogonal directions to $\nu$ is given by
\begin{eqnarray}
\lambda_2(\nu) &=& (n+2)\mu \cdel
\int_{B_\delta(0)}\frac{\nu\cdot w}{\|\nu\|^2\|w\|^{\beta+2} }(\sin(\nu\cdot w)-\nu\cdot w)\;dw.\label{eq:lam2}
\end{eqnarray}
\end{theorem}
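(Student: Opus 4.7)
The two eigenvalue formulas decouple by the structure of $\Mdel(\nu)$. For $\lambda_1(\nu)$, since $\nu$ is an eigenvector of $\Mdel(\nu)$, my plan is to compute
\[
\lambda_1(\nu) = \frac{\nu^{T}\Mdel(\nu)\nu}{\|\nu\|^2} = \frac{\nu^{T}M_b(\nu)\nu}{\|\nu\|^2} + \frac{\nu^{T}M_s(\nu)\nu}{\|\nu\|^2}.
\]
Substituting the integral formulas \eqref{eq:Mb} and \eqref{eq:Ms} and using $\nu^{T}(w\otimes w)\nu = (\nu \cdot w)^2$ yields the first term of \eqref{eq:lam1} after dividing by $\|\nu\|^2$, while the rank-one factored structure of $M_s$ turns $\nu^{T}M_s(\nu)\nu$ into the square of the $\nu$-projected sine-integral, giving the second term. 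This is essentially a one-line calculation.

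For $\lambda_2(\nu)$, the key observation is that $M_s(\nu) = \alpha_s(\nu)\,\nu \otimes \nu$ has rank one along $\nu$ and therefore annihilates every $e \perp \nu$, so $\lambda_2(\nu) = e^{T}M_b(\nu)e$ for any unit $e \perp \nu$. Equivalently, the trace identity $(n-1)\lambda_2(\nu) = \trace M_b(\nu) - \nu^{T}M_b(\nu)\nu/\|\nu\|^2$, combined with \eqref{eq:Mb} and $\trace(w \otimes w) = \|w\|^2$, gives the natural representation
\begin{equation*}
(n-1)\lambda_2(\nu) = \frac{(n+2)\mu\,\cdel}{\|\nu\|^2}\int_{B_\delta(0)}\frac{\bigl(\|\nu\|^2\|w\|^2 - (\nu \cdot w)^2\bigr)(\cos(\nu \cdot w) - 1)}{\|w\|^{\beta+2}}\, dw,
\end{equation*}
in which the integrand features $\cos(\nu\cdot w) - 1$ rather than the $\sin(\nu \cdot w) - \nu \cdot w$ appearing in \eqref{eq:lam2}.

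The main obstacle is this $\cos \to \sin$ conversion. The plan is to expand both integrands in Taylor series in $\nu \cdot w$, interchange sum and integral (justified by uniform convergence on $B_\delta(0)$), and separate radial and angular parts via $w = r\hat{w}$ (taking $\nu = \|\nu\|e_1$ without loss of generality). Both sides then collapse, term-by-term, to the sphere-moment recursion
\begin{equation*}
(2k+1)\int_{S^{n-1}}\hat{w}_1^{2k}\, dS(\hat{w}) = (n+2k)\int_{S^{n-1}}\hat{w}_1^{2k+2}\, dS(\hat{w}), \qquad k \geq 1,
\end{equation*}
which follows from the Beta-function identity $B(k+\tfrac{3}{2},\tfrac{n-1}{2}) = \tfrac{2k+1}{n+2k}\,B(k+\tfrac{1}{2},\tfrac{n-1}{2})$ (or equivalently from integration by parts on the unit sphere). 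A term-wise match then establishes the equality and completes the proof. A more conceptual alternative is to exploit the identity $\cos(\nu \cdot w) - 1 = \nabla_w \cdot \bigl(\nu(\sin(\nu \cdot w) - \nu \cdot w)/\|\nu\|^2\bigr)$ together with the divergence theorem and the coarea formula, but this route introduces boundary contributions on $\partial B_\delta$ that must be carefully reconciled, so the series argument appears cleaner.
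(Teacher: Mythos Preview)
Your argument for $\lambda_1(\nu)$ is exactly what the paper does: write $\lambda_1(\nu)=\nu^{T}\Mdel(\nu)\nu/\|\nu\|^2$ and read off the two pieces from the integral forms \eqref{eq:Mb} and \eqref{eq:Ms}.

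For $\lambda_2(\nu)$ your route is correct but genuinely different from the paper's. The paper does \emph{not} compare the cosine- and sine-integrals directly. Instead it starts from the hypergeometric identification $\lambda_2(\nu)=\alpha_{b1}(\nu)$ already obtained in \eqref{eq:alpha_b1-1}, then recognizes the term $2\,{}_1F_2\bigl(\frac{n-\beta}{2};\frac{n+2}{2},\frac{n+2-\beta}{2};-\frac{1}{4}\|\nu\|^2\delta^2\bigr)$ as the scalar integral $\cdelb\int_{B_\delta(0)}\frac{\nu\cdot w}{\|\nu\|^2\|w\|^{\beta+2}}\sin(\nu\cdot w)\,dw$ via the derivative relation \eqref{eq:dm-dnu}--\eqref{eq:pd-m-f} (with $\beta$ shifted to $\beta+2$), and writes the constant $2$ as $\cdelb\int_{B_\delta(0)}\frac{(\nu\cdot w)^2}{\|\nu\|^2\|w\|^{\beta+2}}\,dw$ using \eqref{eq:identity}. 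Subtracting and cancelling $\cdelb$ gives \eqref{eq:lam2} immediately. So the paper trades your sphere-moment recursion for the hypergeometric bookkeeping it has already set up; the ${}_1F_2$ acts as a bridge between the cosine-based and sine-based integrals, which is precisely the identity your Taylor/Beta-function computation establishes by hand. Your approach is more self-contained (no hypergeometric input needed) and would work even in a paper that never introduced ${}_pF_q$; the paper's approach is shorter given its prior infrastructure and makes transparent why the same ${}_1F_2$ factor governs both representations. One small caveat: your trace identity divides by $n-1$, so it should be read for $n\ge 2$; this is harmless since $\lambda_2$ is vacuous in dimension one.
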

\begin{proof}
Solving  \eqref{eq:lam_1} for  $\lambda_1(\nu)$ we obtain
\begin{equation}
\lambda_1(\nu)= M(\nu)\nu\cdot\frac{\nu}{\|\nu\|^2}.\label{eq:lam1def}
\end{equation}
The result  \eqref{eq:lam1} follows from \eqref{eq:lam1def} combined with the fact that $M=M_b+M_s$ and the integral representations of the multipliers given in \eqref{eq:Mb} and \eqref{eq:Ms}.

To derive \eqref{eq:lam2}, we use \eqref{eq:lam2-alpha_b1} and \eqref{eq:alpha_b1-1} to write
%we first solve \eqref{eq:lam_2} for $\lambda_2(\nu)$ to obtain
%\begin{equation}
%\lambda_2(\nu)= M(\nu)\nu^\perp\cdot\frac{\nu^\perp}{\|\nu^\perp\|^2}.\label{eq:lam2def}
%\end{equation}
\begin{equation}
\lambda_2(\nu)=\frac{(n+2)\mu \cdel}{\cdelb}\left(
2\;{}_1F_2\left(
\frac{n-\beta}{2};\frac{n+2}{2},\frac{n+2-\beta}{2};-\frac{1}{4}\|\nu\|^2\delta^2
\right)
-2\right).\label{eq:lam2-1F2}
\end{equation}
From \eqref{eq:dm-dnu}, \eqref{eq:pd-m-f}, and \eqref{eq:alpha_s}, we obtain
\[
\cdel\int_{B_\delta(0)}\frac{w}{\|w\|^\beta}\sin(\nu\cdot w)\;dw = 2\;{}_1F_2\left(
\frac{n+2-\beta}{2};\frac{n+2}{2},\frac{n+4-\beta}{2};-\frac{1}{4}\|\nu\|^2\delta^2
\right)\nu.
\]
Thus, by replacing $\beta$ by $\beta+2$, then multiplying both sides of the above equation by $\cdot\frac{\nu}{\|\nu\|^2}$, we find
\begin{equation}
\cdelb\int_{B_\delta(0)}\frac{\nu\cdot w}{\|w\|^{\beta+2}\|\nu\|^2}\sin(\nu\cdot w)\;dw = 2\;{}_1F_2\left(
\frac{n-\beta}{2};\frac{n+2}{2},\frac{n+2-\beta}{2};-\frac{1}{4}\|\nu\|^2\delta^2
\right).\label{eq:2-1F2}
\end{equation}
Using \eqref{eq:identity}, we have the following identities
\begin{eqnarray}
\nonumber
2&=& (2 I) \nu \cdot\frac{\nu}{\|\nu\|^2}\\
\nonumber
&=& \left(\cdelb
\int_{B_\delta(0)}\frac{w\otimes w}{\|w\|^{\beta+2}}\;dw\right) \nu \cdot\frac{\nu}{\|\nu\|^2}\\
&=& \cdelb
\int_{B_\delta(0)}\frac{(\nu\cdot w)^2}{\|w\|^{\beta+2}\|\nu\|^2}\;dw.\label{eq:2}
\end{eqnarray}
Using \eqref{eq:lam2-1F2}, \eqref{eq:2-1F2} and \eqref{eq:2}, we obtain
\[
\lambda_2(\nu)=\frac{(n+2)\mu \cdel}{\cdelb}\left(\cdelb\int_{B_\delta(0)}\frac{\nu\cdot w}{\|w\|^{\beta+2}\|\nu\|^2}(\sin(\nu\cdot w)-\nu\cdot w)\;dw\right),
\]
from which the result follows.
\end{proof}

\section{Eigenvalues of the linear peridynamic operator}\label{sec:periodic}
%Consider $\LLdel$ as an operator on periodic vector fields $\LLdel:\bu\mapsto \LLdel\bu$, where $\bu$ is periodic on the 
Consider $\LLdel$ as an operator on the periodic torus
\begin{equation*}
\T^n = \prod_{i=1}^n[0,\ell_i],\qquad\text{with }\ell_i>0,\quad i=1,2,\ldots,n.
\end{equation*}
In  this section, we use the multiplier approach developed in the previous sections to identify the eigenvalues and the eigenvector fields of the operator $\LLdel$,
\[
\LLdel \boldsymbol \phi = \lambda \boldsymbol \phi.
\] 

Let $\gamma$ be a fixed vector in $\Rn$. For any $k \in\mathbb{Z}^n$, define
\begin{eqnarray}\label{eq:nu}
\nonumber
\nu_k&=&(2\pi k_1/ \ell_1,2\pi k_2/ \ell_2,\ldots,2\pi k_n/ \ell_n)^T,\\
\nonumber
\boldsymbol\psi_k(x) &=& e^{i\nu_k\cdot x} \gamma.
\end{eqnarray}
Then, by applying $\LLb$ in \eqref{eq:Lb}, and using \eqref{eq:Mb}, we obtain
\begin{eqnarray}\label{eq:Lb-psi}
\nonumber
\LLb\boldsymbol\psi_k(x) &=& (n+2)\,\mu\,\cdel\int_{B_\delta(0)}
   \frac{w\otimes w}{\|w\|^{\beta+2}}\big( e^{i\nu_k\cdot(x+w)}\,\gamma-e^{i\nu_k\cdot x}\,\gamma\big)\,dw,\\
   \nonumber
   &=& \left((n+2)\,\mu\,\cdel\int_{B_\delta(0)}
   \frac{w\otimes w}{\|w\|^{\beta+2}}\big( e^{i\nu_k\cdot w}-1\big)\,dw\right) \,\,e^{i\nu_k\cdot x}\,\gamma,\\
   &=& M_b(\nu_k)\boldsymbol\psi_k(x).
\end{eqnarray}
Similarily, by applying $\LLs$ in \eqref{eq:Ls}, and using \eqref{eq:Ms}, we obtain
\begin{eqnarray}\label{eq:Ls-psi}
\nonumber
\LLs\boldsymbol\psi_k(x) &=& (\lam-\mu)\,\frac{(\cdel)^2}{4}\int_{B_\delta(0)}\int_{B_\delta(0)}
 \frac{w}{\|w\|^\beta}
\otimes\frac{q}{\|q\|^\beta}\,e^{i\nu_k\cdot(x+q+w)}\,\gamma\,dq dw,\\
   \nonumber
   &=& (\lam-\mu)\,\frac{(\cdel)^2}{4}\left(\int_{B_\delta(0)}
 \frac{w}{\|w\|^\beta} e^{i \nu_k\cdot w}\,dw\right)
\otimes\left(\int_{B_\delta(0)}\frac{q}{\|q\|^\beta} e^{i \nu_k\cdot q}\,dq\right)\,\,e^{i\nu_k\cdot x}\,\gamma,\\
   &=& M_s(\nu_k)\boldsymbol\psi_k(x).
\end{eqnarray}
Equations \eqref{eq:Lb-psi}, \eqref{eq:Ls-psi}, and \eqref{eq:M} yield
\begin{eqnarray}\label{eq:LLdel-psi}
\nonumber
\LLdel \boldsymbol\psi_k&=&\LLb\boldsymbol\psi_k+\LLs\boldsymbol\psi_k,\\
\nonumber
&=&M_b(\nu_k)\boldsymbol\psi_k+M_s(\nu_k)\boldsymbol\psi_k,\\
&=&\Mdel(\nu_k)\boldsymbol\psi_k.
\end{eqnarray}

Denote by $ M_k=\Mdel(\nu_k)$, the tensor multiplier evaluated at $\nu_k$. From \eqref{eq:M2}, \eqref{eq:lam_1}, and \eqref{eq:lam_1}, the matrix $M_k$ is real symmetric and has $n$  orthogonal eigenvectors: one in the direction of $\nu_k$, and $n-1$ orthogonal to $\nu_k$, denoted by  %$\zeta_{k}^1:=\nu_k$ and 
$\zeta_{k}^2, \ldots, \zeta_{k}^n$. The associated eigenvalues are denoted by $\lambda_{1}(k):=\lambda_1(\nu_k)$ and $\lambda_{2}(k):=\lambda_2(\nu_k)$, respectively. Explicitly,
\begin{eqnarray}\label{eq:Mk-eigenvalues}
\nonumber
M_k \nu_{k}&=&\lambda_{1}(k) \nu_{k},\\
\nonumber
M_k \zeta_{k}^j&=&\lambda_{2}(k) \zeta_{k}^j,\quad j=2,\dots,n.
\end{eqnarray}
Define 
\begin{eqnarray}
%\nonumber
\label{eq:eigenvector-fields-1}
\boldsymbol\phi_{k}^1(x)&=&e^{i\nu_k\cdot x} \nu_k,\\
%\nonumber
\label{eq:eigenvector-fields-2}
\boldsymbol\phi_{k}^j(x)&=&e^{i\nu_k\cdot x} \zeta_{k}^j,\quad j=2,\dots,n.
\end{eqnarray}
Then,
%and for $j=1$,
\begin{eqnarray}\label{eq:LLdel-eigenvalues-1}
\nonumber
\LLdel \boldsymbol\phi_{k}^1&=&\Mdel(\nu_k)\boldsymbol\phi_{k}^1,\\
\nonumber
&=& e^{i\nu_k\cdot x} M_k \nu_{k},\\
\nonumber
&=& e^{i\nu_k\cdot x} \lambda_{1}(k) \nu_{k},\\
&=& \lambda_{1}(k) \boldsymbol\phi_{k}^1,
\end{eqnarray}
and, for $j=2,\ldots,n$,
\begin{eqnarray}\label{eq:LLdel-eigenvalues-j}
\nonumber
\LLdel \boldsymbol\phi_{k}^j&=&\Mdel(\nu_k)\boldsymbol\phi_{k}^j,\\
\nonumber
&=& e^{i\nu_k\cdot x} M_k \zeta_{k}^j,\\
\nonumber
&=& e^{i\nu_k\cdot x} \lambda_{2}(k) \zeta_{k}^j,\\
&=& \lambda_{2}(k) \boldsymbol\phi_{k}^j.
\end{eqnarray}
%\begin{cases}
%\lambda_{k,1} \nu_{k,j},\;\;\mbox{ for } j=1,\\
%\lambda_{k,2} \nu_{k,j},\;\; \mbox{ for } %j=2,\ldots,n.\\
%\end{cases}

This shows that the eigenvalues $\lambda_1(k)$ and $\lambda_2(k)$ of the peridynamic operator $\LLdel$, defined on the periodic torus, are the eigenvalues of the tensor multipliers $M_k$.
%and the associated eigenvector fields of $\LLdel$ are $\{\boldsymbol\phi_{k}^j\}_{k\in\mathbb{Z}^n}$, with $j=1,\ldots,n$, as defined in \eqref{eq:eigenvector-fields-1} and \eqref{eq:eigenvector-fields-2}.
The summary is given in the following result.

\begin{theorem}
\label{thm-eigenvalues-L}
Let $k\in\mathbb{Z}^n$. 
The eigenvalues of the linear peridynamic operator $\LLdel$ are $\lambda_1(k):=\lambda_1(\nu_k)$ with associated eigenvector fields $\boldsymbol\phi_{k}^1$ and $\lambda_2(k):=\lambda_2(\nu_k)$ with associated eigenvector fields $\boldsymbol\phi_{k}^j$, for $j=2,\ldots,n$, where $\lambda_1$ and $\lambda_2$ are given in Theorem \ref{thm-eigenvalues-hypergeometric}, or equivalently, in Theorem \ref{thm-eigenvalues-integral}. The eigenvector fields   $\{\boldsymbol\phi_{k}^j\}_{k\in\mathbb{Z}^n}$, with $j=1,\ldots,n$, are defined in \eqref{eq:eigenvector-fields-1} and \eqref{eq:eigenvector-fields-2}.
\end{theorem}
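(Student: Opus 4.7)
The plan is to verify directly that the proposed vector fields are eigenvectors of $\LLdel$ on the periodic torus, leveraging the tensor multiplier analysis already established in Section~\ref{sec:structure}. The key observation is that plane waves of the form $\boldsymbol\psi_k(x) = e^{i\nu_k\cdot x}\gamma$, for any fixed $\gamma\in\Rn$, are the right test functions: the choice $\nu_k=(2\pi k_1/\ell_1,\ldots,2\pi k_n/\ell_n)^T$ ensures periodicity on $\T^n$, and substituting $\boldsymbol\psi_k$ into the integral representations \eqref{eq:Lb} and \eqref{eq:Ls} lets us factor the $x$-dependent exponential $e^{i\nu_k\cdot x}$ out of the nonlocal integrals after a trivial change of variables. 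What remains inside the integrals are exactly the tensor multiplier expressions \eqref{eq:Mb} and \eqref{eq:Ms}, yielding
\[
\LLb\boldsymbol\psi_k(x) = M_b(\nu_k)\,\boldsymbol\psi_k(x),\qquad
\LLs\boldsymbol\psi_k(x) = M_s(\nu_k)\,\boldsymbol\psi_k(x),
\]
and, by \eqref{eq:M}, $\LLdel\boldsymbol\psi_k(x) = \Mdel(\nu_k)\boldsymbol\psi_k(x) = M_k\,\boldsymbol\psi_k(x)$.

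Next I would invoke the explicit structure of $M_k$ derived in Section~\ref{sec:eigenvectors}. By \eqref{eq:M2}, $M_k = \alpha_{b1}(\nu_k) I + (\alpha_{b2}(\nu_k)+\alpha_s(\nu_k))\,\nu_k\otimes\nu_k$ is real symmetric, with $\nu_k$ as an eigenvector of eigenvalue $\lambda_1(k)=\lambda_1(\nu_k)$ and the entire hyperplane $\nu_k^\perp$ as an eigenspace of eigenvalue $\lambda_2(k)=\lambda_2(\nu_k)$. Choosing any orthonormal basis $\zeta_k^2,\ldots,\zeta_k^n$ of $\nu_k^\perp$ produces $n-1$ additional eigenvectors. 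Substituting $\gamma=\nu_k$ into the computation of the previous paragraph yields $\boldsymbol\phi_k^1$ of \eqref{eq:eigenvector-fields-1}, and
\[
\LLdel\boldsymbol\phi_k^1 = M_k\nu_k\,e^{i\nu_k\cdot x} = \lambda_1(k)\,\boldsymbol\phi_k^1.
\]
Substituting $\gamma=\zeta_k^j$ produces $\boldsymbol\phi_k^j$ of \eqref{eq:eigenvector-fields-2} and $\LLdel\boldsymbol\phi_k^j=\lambda_2(k)\,\boldsymbol\phi_k^j$ for $j=2,\ldots,n$. The explicit formulas for $\lambda_1(k)$ and $\lambda_2(k)$ are then inherited from Theorem~\ref{thm-eigenvalues-hypergeometric} (hypergeometric form) or Theorem~\ref{thm-eigenvalues-integral} (integral form).

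There is essentially no deep obstacle in the argument itself, since the tensor multiplier $\Mdel$ and its spectral decomposition have already been fully unpacked. The one conceptual point worth emphasizing in the write-up is why plane waves $e^{i\nu_k\cdot x}\gamma$ act correctly on the nonlocal operator: the translation invariance of the kernel in \eqref{eq:Lb} and \eqref{eq:Ls} is precisely what allows the exponential to pass outside the integral, reducing the operator to a matrix multiplication by $M_k$. Once this is recorded, the theorem follows by reading off the eigenpairs of $M_k$ from \eqref{eq:M2}, \eqref{eq:lam_1}, and \eqref{eq:lam_2}.
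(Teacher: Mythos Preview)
Your proposal is correct and follows essentially the same approach as the paper: apply $\LLb$ and $\LLs$ to the plane wave $\boldsymbol\psi_k(x)=e^{i\nu_k\cdot x}\gamma$, factor out the exponential to obtain $\LLdel\boldsymbol\psi_k=\Mdel(\nu_k)\boldsymbol\psi_k$, and then specialize $\gamma$ to the eigenvectors $\nu_k$ and $\zeta_k^j$ of the matrix $M_k$ from \eqref{eq:M2}. The paper's argument is laid out in the paragraphs preceding the theorem statement (equations \eqref{eq:Lb-psi}--\eqref{eq:LLdel-eigenvalues-j}), and your write-up matches it step for step.
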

Convergence of the eigenvalues of the peridynamic operator to the eigenvalues of the Navier operator follows immediately from  Corollary \ref{cor:eigenvalues-converge} and Theorem \ref{thm-eigenvalues-L}. The eigenvalues of the Navier operator in \eqref{Nhomo} are given  by
\begin{eqnarray}
\label{N-eigenvalues}
    \lambda^{\Nav}_1(k)&=& -(\lam+2\mu)\|k\|^2,\quad \mbox{ and }\quad
    \lambda^{\Nav}_2(k)= -\mu\;\|k\|^2. 
\end{eqnarray}
\begin{figure}
\centering
\includegraphics[width=\textwidth]{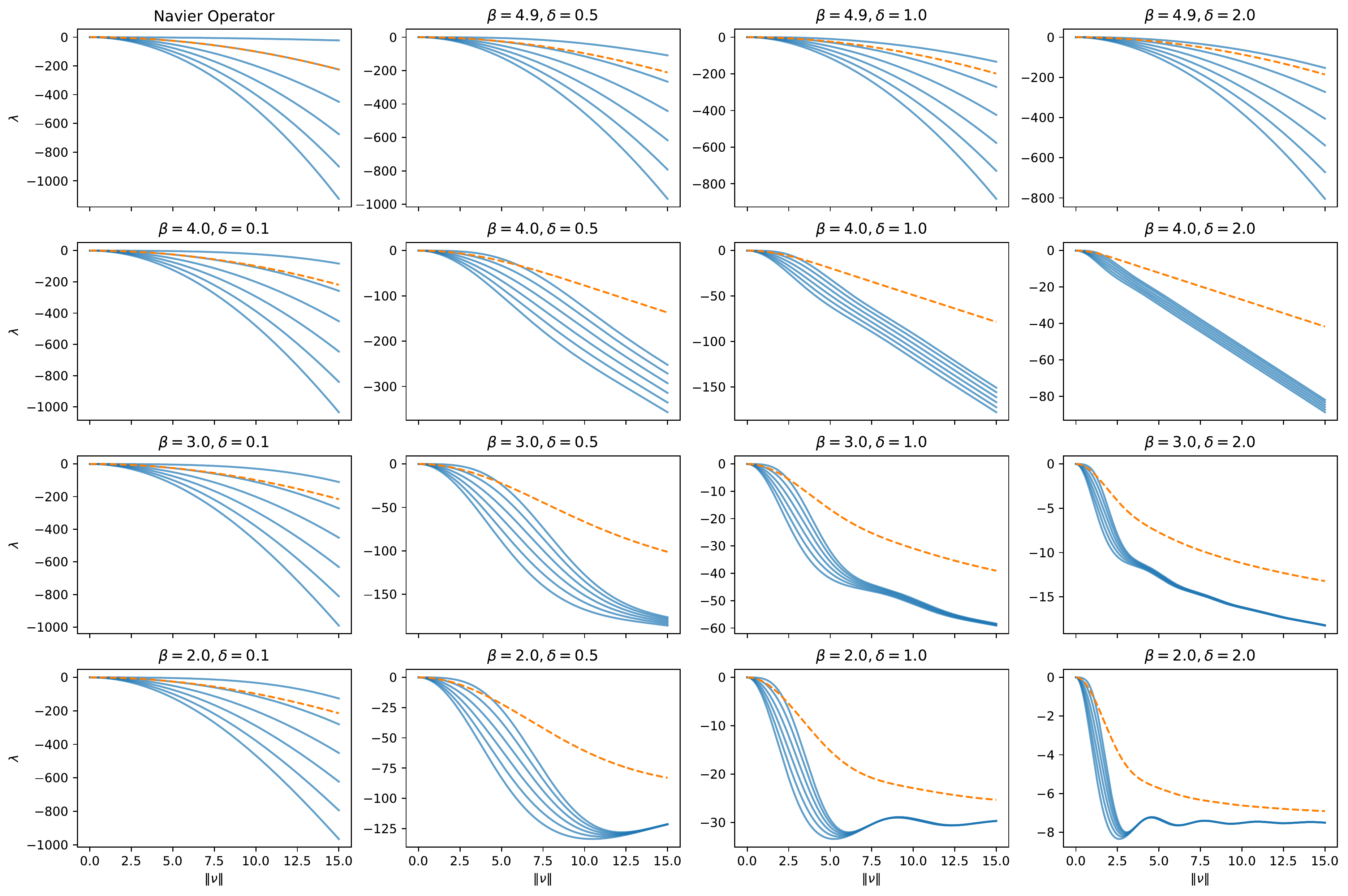}
\caption{The eigenvalues (vertical axis) $\lambda_1(\nu)$ and $\lambda_2(\nu)$, as given by \eqref{eq:lam1} and \eqref{eq:lam2}, for the 3D case. Here $\|\nu\|$ (horizontal axis) is sampled at $1000$ equispaced points in the interval $[0,15]$ and $\delta$ and $\beta$ are as given in the titles.  The shear modulus and the second Lam\'{e} parameter are given by $\mu=1$ and $\lam=-1.9, -1, 0, 1, 2$. For each plot, the dashed line shows $\lambda_2(\nu)$ and the solid lines show $\lambda_1(\nu)$ corresponding to the different values of $\lam$ in a decreasing order, i.e., the top solid curve corresponds to $\lam=-1.9$, the second corresponds to $\lam=-1$, etc.}
\label{fig:vector-multipliers}
\end{figure}

\section{Discussion}
The hypergeometric representations of the eigenvalues, given in \eqref{eq:lambda1} and \eqref{eq:lambda2}, are utilized to compute the eigenvalues $\lambda_1$ and $\lambda_2$ as shown in Figure \ref{fig:vector-multipliers}. 
%We note that the Navier operator corresponds to 
It easily follows from \eqref{N-eigenvalues} that for the Navier operator,   %$\lambda^{\Nav}_1(k)=\lambda^{\Nav}_2(k)$, when $\lam=-\mu$,
the eigenvalues are non-positive and  $\lambda^{\Nav}_1(\nu)$ is decreasing in $\lam$ for a fixed value of $\mu$, which additionally can  be seen from the eigenvalues' curves for the Navier operator (top-left) in Figure  \ref{fig:vector-multipliers}. The non-positivity of the eigenvalues and the  monotonicity of $\lambda_1(\nu)$ as a function of $\lam$  hold true as well for the peridynamic operator. These observations follow from \eqref{eq:lam1} and \eqref{eq:lam2} for any $\delta>0$ and $\beta<n+2$ and can also be observed   in Figure  \ref{fig:vector-multipliers}. In addition, in this figure, we note that in the first row (which corresponds to $\delta$ being close to $0$) and the first column (which corresponds to $\beta$ being close to $n+2$) the eigenvalues satisfy $\lambda_1(\nu)\approx \lambda_1^\Nav(\nu)$ and $\lambda_2(\nu)\approx \lambda_2^\Nav(\nu)$, which is consistent with Corollary \ref{cor:eigenvalues-converge} and the fact that the hypergeometric functions in \eqref{eq:lambda1-alt} and \eqref{eq:lambda2} are continuous.  
Moreover, in the  second row of Figure  \ref{fig:vector-multipliers}, which corresponds to $\beta=n+1$, we observe that the curves of the eigenvalues $\lambda_1(\nu)$ and $\lambda_2(\nu)$ are linear, of order $~\|\nu\|$,  for large values of $\|\nu\|$.  The asymptotic behavior of  the eigenvalues in the third row of this figure, which corresponds to $\beta=n$,  can be shown to be logarithmic in $\|\nu\|$.
Furthermore, for integrable kernels (when $\beta<n$) it can be seen from the fourth row of  Figure  \ref{fig:vector-multipliers} that the eigenvalues are bounded.
These observations can be rigorously proved, similar to the approach followed in \cite{alali2019fourier}, using the hypergeometric representations \eqref{eq:lambda1-alt} and \eqref{eq:lambda2}, and can be used to derive regularity results for peridynamic equations. Lastly, we notice in the figure that the curves of $\lambda_1(\nu)$ for different values of $\lam$ converge to a single curve for large values of $\|\nu\|$, in the case that $\beta<n$.
 This can be shown using the integral representation  \eqref{eq:lam1} of $\lambda_1$ and the Riemann–Lebesgue Lemma, which implies that $\sin(\nu\cdot w)$ weakly converges to $0$ in the limit as $\|\nu\|\rightarrow\infty$.

\bibliographystyle{acm}
\bibliography{refs}
\end{document}